\numberwithin{equation}{section}
\newtheorem{theorem}{Theorem}[section]
\newtheorem{proposition}[theorem]{Proposition}
\newtheorem{corollary}[theorem]{Corollary}
\newtheorem{lemma}[theorem]{Lemma}
\theoremstyle{definition}
\begin{document}
\title
[Positive  solutions to multi-critical Schr\"{o}dinger equations]
{Positive  solutions to multi-critical Schr\"{o}dinger equations}

\maketitle
\begin{center}
\author{Ziyi Xu, \ \  Jianfu Yang}
\footnote{ Email addresses: xuziyi@jxnu.edu.cn, jfyang200749@sina.com}
\end{center}
\begin{center}

\address{School of Mathematics
and Statistics, Jiangxi Normal University, Nanchang,
Jiangxi 330022, P. R. China }
\end{center}

\begin{abstract}
In this paper, we investigate the existence of multiple positive solutions to the following multi-critical Schr\"{o}dinger equation
\begin{equation}
\label{p}
\begin{cases}
 -\Delta u+\lambda V(x)u=\mu |u|^{p-2}u+\sum\limits_{i=1}^{k}(|x|^{-(N-\alpha_i)}* |u|^{2^*_i})|u|^{2^*_i-2}u\quad \text{in}\  \mathbb{R}^N,\\
\qquad\qquad\qquad u\,\in H^1(\mathbb{R}^N),
\end{cases}
\end{equation}
where $\lambda,\mu\in \mathbb{R}^+, \, N\geqslant 4$, and  $2^*_i=\frac{N+\alpha_i}{N-2}$ with $N-4<\alpha_i<N,\,i=1,2,\cdots,k$ are critical exponents and $2<p<2^*_{min}=\min\{2^*_i:i=1,2,\cdots,k\}$. Suppose that $\Omega=int\,V^{-1}(0)\subset\mathbb{R}^N$ is a bounded domain, we show that  for $\lambda$ large, problem \eqref{p} possesses at least $cat_\Omega(\Omega)$ positive solutions.

 {\bf Key words }:  Multi-critical Schr\"{o}dinger equation, \, multiple solutions, \, Lusternik-Schnirelman theory.

{\bf MSC2020:} 35J10, 35J20, 35J61

\end{abstract}

\bigskip

\section{Introduction}

\bigskip

In this paper, we investigate the existence of multiple positive solutions to the following multi-critical Schr\"{o}dinger equation
\begin{equation}
\label{p0}
\begin{cases}
 -\Delta u+\lambda V(x)u=\mu |u|^{p-2}u+\sum\limits_{i=1}^{k}(|x|^{-(N-\alpha_i)}* |u|^{2^*_i})|u|^{2^*_i-2}u\quad \text{in}\  \mathbb{R}^N,\\
\qquad\qquad\qquad u\,\in H^1(\mathbb{R}^N),
\end{cases}
\end{equation}
where $\lambda,\mu\in \mathbb{R}^+,\, N\geqslant 4$,  and $2^*_i=\frac{N+\alpha_i}{N-2}$ with $N-4<\alpha_i<N, \, i=1,2,\cdots,k$ are critical Hardy-Littlewood-Sobolev exponents and $2<p<2^*_{min}=\min\{2^*_i:i=1,2,\cdots,k\}$.

In the case $k=1$, problem \eqref{p0} is so-called Choquared equation. It has been extensively studied for subcritical and critical cases in \cite{Gao, Goel, MZ, Marco, Moroz, Moroz2} and references therein. Particularly, multiple solutions for the nonlinear Choquard problem
\begin{equation}\label{eq:1.1}
\left\{\begin{aligned}
-\Delta u +\lambda u& =(|x|^{-(N-\alpha)}*|u|^q)|u|^{q-2}u\quad {\rm in}\quad \Omega, \\
&u\in H^1_0(\Omega)\\
\end{aligned}\right.
\end{equation}
in the bounded $\Omega$ are obtained in \cite{Marco} by Lusternik-Schnirelman theory. It was proved that if $q$ is closed to the critical exponent $2^*_\alpha = \frac{N+\alpha}{N-2}$, problem \eqref{eq:1.1} possesses at least $cat_\Omega(\Omega)$ positive solutions; similar results were obtained in \cite{Goel}  for the critical case $q= 2^*_\alpha$. This sort of results
extend early works for elliptic problem with local nonlinearities by \cite{Alves,Bahri,Benci1,Benci2,Coron, Warner} etc.

A counterpart  for the Schr\"odinger equation
\begin{equation}\label{eq:1.2}
-\epsilon^2\Delta u+V(x)u=|u|^{p-2}u\quad \text{in}\ \mathbb{R}^N
\end{equation}
were considered in \cite{Chabrowski} and  \cite{Cingolani}. Although the topology of whole space $\mathbb{R}^N$ seems no effect of the existence of solutions,  a graph of the potential function $V(x)$ actually affects the existence of the number of solutions. Precisely, assume, among other things, that $\Omega:=V^{-1}(0)=\{x\in\mathbb{R}^N:V(x)=0\}$ is bounded, it is proved in \cite{Cingolani} for the subcritical case and in
\cite{Chabrowski} for the critical case that problem \eqref{eq:1.2} possesses at least $cat_\Omega(\Omega)$ positive solutions if $\varepsilon>0$ is small.

Analogous problem for the Schr\"odinger equation
\begin{equation}\label{w2}
  -\Delta u+(\lambda V(x)+1)u=u^{p-1}\quad u\in H^1(\mathbb{R}^N),
\end{equation}
was investigated in \cite{Wang} and \cite{Ding} for subcritical and critical cases respectively. Under certain conditions on $V(x)$, it is obtained in  \cite{Wang} and \cite{Ding} $cat_\Omega(\Omega)$ positive solutions for problem \eqref{w2} if $\lambda>0$ large. Various extensions of the results in \cite{Wang} and \cite{Ding} can be found in \cite{Alves2}  for $p$-Laplacian problem and in \cite{MZ} the fractional Laplacian problem.

In this paper, we focus on problem \eqref{p0} in the  case $k>2$, multi-critical terms then involved in. In the bounded domain $\Omega$,
the existence of positive solution of the multi-critical Sobolev-Hardy problem
 \begin{equation}\label{eq:1.3}
     \begin{aligned}
        \begin{cases}
         -\Delta u=\frac{\lambda}{|x|^s}u^{p-1}+\sum\limits_{i=1}^l\frac{\lambda_i}{|x|^{s_i}}u^{2^*(s_i)-1}+u^{2^*-1}\quad \text{in }\Omega,\\
         \quad\ \  u>0\quad \text{in }\Omega,\\
         \quad\ \  u=0\quad\text{on }\Omega
        \end{cases}
     \end{aligned}
 \end{equation}
 was studied in \cite{Z.Gao}, while the number of  positive solutions of the Choquard type problem
\begin{equation}
    \label{w4}
\begin{cases}
 -\Delta u=\mu |u|^{p-2}u+\sum\limits_{i=1}^{k}(|x|^{-(N-\alpha_i)}* |u|^{2^*_i})|u|^{2^*_i-2}u\quad \text{in}\  \Omega,\\
\quad\ \  u\in H_0^1(\Omega)
\end{cases}
\end{equation}
are described in \cite{LYY} by the Lusternik-Schnirelman category $cat_\Omega(\Omega)$ of the domain $\Omega$. Such a result seems difficult to establish for problem \eqref{eq:1.3}.

Inspired by \cite{Wang} and \cite{Ding}, we will show that problem \eqref{p0} has at least $cat_\Omega(\Omega)$ positive solutions based on the result in \cite{LYY}.

Our hypotheses  are  as follows.

$(V1)$ $V(x)\in \mathcal{C}(\mathbb{R}^N,\mathbb{R}), V(x)\geqslant0,$ and $\Omega:=\mbox{int}V^{-1}(0)$ is a nonempty bounded set with smooth boundary, and $\overline{\Omega}=V^{-1}(0)$.\\

$(V2)$ There exists $M_0>0$ such that
$$mes\{x\in \mathbb{R}^N:V(x)\leqslant M_0\}<\infty.$$

Let $\mu_1$ be the first eigenvalue of $-\Delta$ on $\Omega$ with zero Dirichlet condition.  We present our first result as follows.

\begin{theorem}
\label{th1}
  Suppose $(V1)$ and $(V2)$ hold, then for every $0<\mu<\mu_1$, there exists $\lambda(\mu)>0$ such that problem \eqref{p0} has at least one ground state positive solution for each $\lambda\geqslant\lambda(\mu)$.
\end{theorem}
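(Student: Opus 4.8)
The plan is to realize the ground state as a minimizer of the energy on the Nehari manifold, the decisive issue being the loss of compactness caused by the critical Choquard terms. First I would fix the energy space. Since \eqref{p0} carries no mass term, I would work in
\[
E_\lambda=\Big\{u\in D^{1,2}(\r^N):\int_{\r^N}\lambda V(x)u^2\,dx<\infty\Big\},\qquad \|u\|_\lambda^2=\int_{\r^N}\big(|\nabla u|^2+\lambda V(x)u^2\big)\,dx .
\]
Using $(V2)$ I would split $\int u^2$ over $\{V>M_0\}$ and over the finite-measure set $\{V\leqslant M_0\}$: on the first set $V$ yields an $L^2$ bound of size $1/(\lambda M_0)$, and on the second Hölder together with the Sobolev embedding converts an $L^{2^*}$ norm into an $L^2$ bound. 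This gives $E_\lambda\hookrightarrow H^1(\r^N)$ with embedding constant uniform for $\lambda\geqslant1$, hence $E_\lambda\hookrightarrow L^q(\r^N)$ for $2\leqslant q\leqslant 2^*$ and, by the Hardy--Littlewood--Sobolev inequality, the nonlocal terms $D_i(u)=\int\!\!\int |x-y|^{-(N-\alpha_i)}|u(x)|^{2^*_i}|u(y)|^{2^*_i}\,dx\,dy$ are finite. Thus $I_\lambda(u)=\tfrac12\|u\|_\lambda^2-\tfrac{\mu}{p}\int|u|^p-\sum_{i}\tfrac{1}{2\cdot 2^*_i}D_i(u)$ is $C^1$ on $E_\lambda$ and its critical points solve \eqref{p0}.

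Next I would introduce the Nehari manifold $\mathcal{N}_\lambda=\{u\neq0:\langle I_\lambda'(u),u\rangle=0\}$ and set $c_\lambda=\inf_{\mathcal{N}_\lambda}I_\lambda$. Because every nonlinear term is superquadratic ($2<p<2^*_i<2\cdot 2^*_i$), the fibering map $t\mapsto I_\lambda(tu)$ has a unique positive maximum, so $\mathcal{N}_\lambda$ is a natural constraint; on it
\[
I_\lambda(u)=\Big(\tfrac12-\tfrac1p\Big)\|u\|_\lambda^2+\sum_{i}\Big(\tfrac1p-\tfrac{1}{2\cdot 2^*_i}\Big)D_i(u)\geqslant0,
\]
together with a uniform lower bound $\|u\|_\lambda\geqslant\rho>0$, whence $c_\lambda>0$. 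Ekeland's variational principle then produces a $(PS)_{c_\lambda}$ sequence for $I_\lambda$.

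The crux is compactness. I would prove a local Palais--Smale condition: $I_\lambda$ satisfies $(PS)_c$ for every $c$ below a threshold $c^*$ determined by the best constants of the critical Hardy--Littlewood--Sobolev inequalities attached to $\alpha_1,\dots,\alpha_k$. Here the difficulty, and what distinguishes the present situation from the single-critical Choquard case, is that several critical nonlocal terms may concentrate simultaneously, so the Brezis--Lieb splitting of each $D_i$ and the concentration--compactness accounting must be carried out jointly to locate $c^*$ and to rule out energy leakage at the concentration scale and at infinity (the latter excluded by $(V2)$). To place $c_\lambda$ strictly under $c^*$ I would test with functions supported in $\Omega$: since $V\equiv0$ there, $\|u\|_\lambda=\|\nabla u\|_{2}$ on $H^1_0(\Omega)$ and $I_\lambda$ restricts to the Dirichlet functional of problem \eqref{w4}, so $c_\lambda\leqslant c_\Omega$, the ground-state level on $\Omega$. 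The hypothesis $0<\mu<\mu_1$ then enters through the ground-state analysis of \eqref{w4} in \cite{LYY}, which guarantees that the subcritical term $\mu|u|^{p-2}u$ pushes this level strictly below the threshold, $c_\Omega<c^*$, and therefore $c_\lambda<c^*$.

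With compactness restored, a $(PS)_{c_\lambda}$ sequence converges, along a subsequence, to a nontrivial critical point $u_\lambda$ with $I_\lambda(u_\lambda)=c_\lambda$. Since $I_\lambda(|u_\lambda|)=I_\lambda(u_\lambda)$ and $|u_\lambda|\in\mathcal{N}_\lambda$, I may take $u_\lambda\geqslant0$, and as $-\Delta u_\lambda+\lambda V u_\lambda\geqslant0$ with $\lambda V\geqslant0$, the strong maximum principle yields $u_\lambda>0$; by construction it is a ground state. I expect the joint concentration analysis for the several critical terms—identifying $c^*$ and verifying $c_\lambda<c^*$—to be the main obstacle, the remaining steps being routine once the compactness threshold is secured.
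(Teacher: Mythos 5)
Your skeleton does coincide with the paper's: minimize $I_{\lambda,\mu}$ on the Nehari manifold, bound the level by the Dirichlet level $c_{\mu,\Omega}$ of problem \eqref{w4} via \cite{LYY}, prove a local $(PS)_c$ condition, and obtain positivity from $|u_\lambda|$ plus the strong maximum principle; your $D^{1,2}$-based norm with the uniform embedding extracted from $(V2)$ is a legitimate substitute for the paper's route through the spectral bound of Lemma \ref{lem2}. The genuine gap sits exactly in the step you yourself call the main obstacle, and your pointer for resolving it is wrong. The compactness threshold is \emph{not} "determined by the best constants of the critical Hardy--Littlewood--Sobolev inequalities attached to $\alpha_1,\dots,\alpha_k$": since all $k$ critical terms concentrate on one and the same Aubin--Talenti profile, the sharp threshold is $m(\mathbb{R}^N)$, the ground-state level of the \emph{joint} limit problem \eqref{p2} containing all critical terms simultaneously, which \cite{LYY} shows is attained by the bubble $U$. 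This level lies strictly below every single-term level: if $U_j$ minimizes the functional with only the $j$-th Choquard term, then $\sup_{t>0}J(tU_j)<\sup_{t>0}\bigl[\tfrac{t^2}{2}\|\nabla U_j\|_2^2-\tfrac{t^{2\cdot2^*_j}}{2\cdot 2^*_j}D_j(U_j)\bigr]=m_j$, because the discarded integrals $D_l(U_j)$, $l\neq j$, are strictly positive. Consequently any $c^*$ built from the individual HLS constants satisfies $c^*\geqslant\min_j m_j>m(\mathbb{R}^N)$, and the asserted "$(PS)_c$ for all $c<c^*$" is then false: truncating the bubbles $U_\varepsilon$ inside $\Omega$ and letting $\varepsilon\to0$ gives a Palais--Smale sequence at level $m(\mathbb{R}^N)<c^*$ that converges weakly to $0$ but not strongly. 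Conversely, if your $c^*$ came out below $m(\mathbb{R}^N)$, the level estimate $c_\Omega<c^*$ would no longer follow from \cite{LYY}, which only provides $c_{\mu,\Omega}<m(\mathbb{R}^N)$. So the argument closes only with $c^*=m(\mathbb{R}^N)$, which is how the paper's Lemma \ref{lem6} and Proposition \ref{prop1} are formulated; moreover, with $m(\mathbb{R}^N)$ known to be attained, the paper needs no joint concentration--compactness at all: after the Br\'ezis--Lieb splitting $u_n=u_0+w_n$, the remainder is projected first onto $\mathcal{N}_{\lambda,\mu}$ and then onto $\mathcal{N}_{\mathbb{R}^N}$, and the inequality $J\geqslant m(\mathbb{R}^N)$ on $\mathcal{N}_{\mathbb{R}^N}$ forces $c\geqslant m(\mathbb{R}^N)+o_n(1)+o_\lambda(1)$, the desired contradiction.

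A second, related error: your claim that leakage at infinity is "excluded by $(V2)$" fails at fixed $\lambda$, and this is precisely what hides the origin of $\lambda(\mu)$. Assumption $(V2)$ only yields $\int_{\{V\geqslant M_0\}}w_n^2\leqslant K/(\lambda M_0)$, which is small for large $\lambda$ but is not $o_n(1)$; hence in the energy comparison the subcritical term of the escaping part contributes an error $\mu|w_n|_p^p$ that is merely $o_\lambda(1)$ (by interpolation), not $o_n(1)$. That is why the $(PS)_c$ condition below $m(\mathbb{R}^N)-\tau$ holds only for $\lambda\geqslant\lambda(\mu)$ large, as in Proposition \ref{prop1}. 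In your sketch the embedding constant is uniform for $\lambda\geqslant1$, the bound $c_\lambda\leqslant c_{\mu,\Omega}$ holds for every $\lambda$, and compactness is asserted with no $\lambda$-dependence, so nothing in your argument ever produces the $\lambda(\mu)$ required by the statement --- a symptom that the compactness step has not actually been carried out. Once you set $c^*=m(\mathbb{R}^N)$, import both the attainment of $m(\mathbb{R}^N)$ and the strict inequality $c_{\mu,\Omega}<m(\mathbb{R}^N)$ from \cite{LYY}, and keep track of the $o_\lambda(1)$ errors, the remainder of your outline (Ekeland, strong convergence, $|u_\lambda|$, maximum principle) goes through as in the paper.
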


Solutions of problem \eqref{p0} are found as critical points of the associated functional
\begin{equation}\label{I0}
\begin{split}
    I_{\lambda,\mu}(u)&=\frac{1}{2}\int_{\mathbb{R}^N}(|\nabla u|^2+\lambda V(x)u^2)\,dx-\frac{\mu}{p}\int_{\mathbb{R}^N}|u|^p\,dx\\
    &-\sum\limits_{i=1}^k\frac{1}{2  2^*_i}\int_{\mathbb{R}^N}\int_{\mathbb{R}^N}\frac{|u(x)|^{2^*_i}|u(y)|^{2^*_i}}{|x-y|^{N-\alpha_i}}\,dxdy\\
    \end{split}
\end{equation}
in the Hilbert space
\begin{equation}\label{eq:1.4}
E:=\{u\in H^1(\mathbb{R^N}):\int_{\mathbb{R}^N} V(x)u^2\,dx<\infty\}
\end{equation}
endowed with the norm
\begin{equation}\label{eq:1.5}
\|u\|^2=\|u\|^2_{H^1}+\int_{\mathbb{R}^N}V(x)u^2\,dx.
\end{equation}

Theorem \ref{th1} is proved by the mountain pass theorem. A crucial ingredient is to verify the $(PS)_c$ condition for the functional $I_{\lambda,\mu}(u)$. By a $(PS)_c$
condition we mean a sequence $\{u_n\}$ in $E$ satisfying $I_{\lambda,\mu}(u_n)\to c,\, I'_{\lambda,\mu}(u_n)\to 0$ as $n\to\infty$ contains a convergent subsequence.
We will show that $I_{\lambda,\mu}(u)$ satisfies $(PS)_c$ condition if $c< m(\mathbb{R}^N)$ with
\begin{equation}\label{m2}
    m(\mathbb{R}^N):=\inf\{J(u):u\in\mathcal{N}_{\mathbb{R}^N}\},
\end{equation}
where
\begin{equation}\label{I2}
    J(u)=\frac{1}{2}\int_{\mathbb{R}^N}|\nabla u|^2\,dx-\sum\limits_{i=1}^k\frac{1}{2  2^*_i}\int_{\mathbb{R}^N}\int_{\mathbb{R}^N}\frac{|u(x)|^{2^*_i}|u(y)|^{2^*_i}}{|x-y|^{N-\alpha_i}}\,dxdy
\end{equation}
and
\begin{equation}\label{N2}
    \mathcal{N}_{\mathbb{R}^N}:=\{u\in D^{1,2}(\mathbb{R}^N)\backslash\{0\}:\langle J'(u),u\rangle=0\}.
\end{equation}
It is shown in \cite{LYY} that $ m(\mathbb{R}^N)$ is uniquely achieved up to translations and dilations by the function
$$U(x)=C\left(\frac{\varepsilon}{\varepsilon^2+|x|^2}\right)^{\frac{N-2}{2}}$$
with $\varepsilon>0$, which is a solution for the limit problem
\begin{equation}
\label{p2}
 -\Delta u=\sum\limits_{i=1}^{k}(|x|^{-(N-\alpha_i)}* |u|^{2^*_i})|u|^{2^*_i-2}u\quad \text{in}\  \mathbb{R}^N.
\end{equation}
\bigskip

Next, we consider the limit behavior of solutions \eqref{p0}. Let $\lambda_n\to\infty$ and $\{u_n\}$ be the corresponding solutions of \eqref{p0}.
Then, we will show that $\{u_n\}$ concentrates at a solution of the limit problem
\begin{equation}\label{p1}
\begin{cases}
 -\Delta u=\mu |u|^{p-2}u+\sum\limits_{i=1}^{k}(|x|^{-(N-\alpha_i)}* |u|^{2^*_i})|u|^{2^*_i-2}u\quad \text{in}\  \Omega,\\
\quad\ \  u\in H_0^1(\Omega).
\end{cases}
\end{equation}

\begin{theorem}
\label{th2}
  Suppose (V1) and (V2) hold. Let $\{u_n\}$ be a sequence of solutions of \eqref{p0} such that $0<\mu<\mu_1,\lambda_n\to\infty$ and $I_{\lambda_n,\mu}(u_n)\to c<m(\mathbb{R}^N)$ as $n\to\infty$. Then, $\{u_n\}$ concentrates at a solution $u$ of \eqref{p1}, that is, $u_n\to u$ in $E$ as $n\to\infty$.
\end{theorem}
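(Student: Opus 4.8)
The plan is to follow the standard deep-potential-well scheme: first extract a weak limit, identify it as a solution of the limit problem \eqref{p1}, and then upgrade weak to strong convergence by excluding the escape of a critical ``bubble'', which is precisely where the hypothesis $c<m(\mathbb{R}^N)$ enters.

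First I would establish a uniform bound on $\|u_n\|_{\lambda_n}^2:=\int_{\mathbb{R}^N}(|\nabla u_n|^2+\lambda_n V u_n^2)$. Since each $u_n$ is a critical point, combining $I_{\lambda_n,\mu}(u_n)\to c$ with $\langle I'_{\lambda_n,\mu}(u_n),u_n\rangle=0$ through the combination $I_{\lambda_n,\mu}(u_n)-\tfrac1p\langle I'_{\lambda_n,\mu}(u_n),u_n\rangle$ makes the subcritical $p$-term cancel and leaves only terms with positive coefficients (using $2<p<2^*_{\min}\le 2^*_i$), which yields the bound. Because $\lambda_n\to\infty$, the bound forces $\int V u_n^2\to 0$, so along a subsequence $u_n\rightharpoonup u$ in $E$, $u_n\to u$ in $L^q_{loc}$ and a.e. By Fatou, $\int V u^2=0$, hence $u=0$ a.e. on $\{V>0\}$; since $\overline\Omega=V^{-1}(0)$ and $\partial\Omega$ is smooth, this gives $u\in H^1_0(\Omega)$. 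Moreover hypothesis $(V2)$ lets me split $\mathbb{R}^N=\{V\le M_0\}\cup\{V>M_0\}$ and conclude $u_n\to u$ strongly in $L^q(\mathbb{R}^N)$ for every subcritical $2\le q<2^*$ (on $\{V>M_0\}$ the mass is $O(1/\lambda_n)$, while on the finite-measure set $\{V\le M_0\}$ one uses compactness). Passing to the limit in $\langle I'_{\lambda_n,\mu}(u_n),\varphi\rangle=0$ for $\varphi\in C_c^\infty(\Omega)$ — where the potential term drops since $V\equiv0$ on $\Omega$ — then shows $u$ solves \eqref{p1}.

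The core is the strong convergence. Set $w_n=u_n-u\rightharpoonup0$ and $D_i(v)=\int_{\mathbb{R}^N}\int_{\mathbb{R}^N}|v(x)|^{2^*_i}|v(y)|^{2^*_i}|x-y|^{-(N-\alpha_i)}\,dx\,dy$. Using the Brezis--Lieb splitting for $\int|\nabla\,\cdot\,|^2$ and the nonlocal Brezis--Lieb lemma for each $D_i$ (both for the energy and for its derivative), together with $Vu\equiv0$ and the strong $L^p$ convergence above, I would derive the identity $\int|\nabla w_n|^2+\lambda_n\int V w_n^2=\sum_i D_i(w_n)+o(1)$ and the energy decomposition $I_{\lambda_n,\mu}(u_n)=I_\Omega(u)+E_n+o(1)$, where $I_\Omega$ is the energy functional of \eqref{p1} and $E_n=\tfrac12\|w_n\|_{\lambda_n}^2-\sum_i\tfrac1{2\cdot2^*_i}D_i(w_n)$. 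The first identity simplifies the second to $E_n=\sum_i(\tfrac12-\tfrac1{2\cdot2^*_i})D_i(w_n)+o(1)$, and since $u$ solves \eqref{p1} a Nehari computation gives $I_\Omega(u)\ge0$.

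Finally, suppose $\ell:=\lim\|w_n\|_{\lambda_n}^2>0$ along a subsequence. Then $\sum_iD_i(w_n)\to\ell>0$, so $w_n$ is asymptotically nontrivial. Analyzing the fibering map $t\mapsto J(tw_n)$ and using that $\int|\nabla w_n|^2\le\sum_iD_i(w_n)+o(1)$ (because $\lambda_n\int Vw_n^2\ge0$) forces its maximiser to satisfy $t_n\le1+o(1)$; then $J(t_nw_n)=\sum_i(\tfrac12-\tfrac1{2\cdot2^*_i})t_n^{2\cdot2^*_i}D_i(w_n)\le E_n+o(1)$, and invoking the variational characterization $m(\mathbb{R}^N)=\inf_{w\ne0}\max_{t>0}J(tw)$ from \cite{LYY} gives $E_n\ge m(\mathbb{R}^N)-o(1)$. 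Hence $c=I_\Omega(u)+\lim E_n\ge m(\mathbb{R}^N)$, contradicting $c<m(\mathbb{R}^N)$. Therefore $\|w_n\|_{\lambda_n}\to0$, which combined with $\int|w_n|^2\to0$ yields $u_n\to u$ in $E$. I expect the main obstacle to be this bubble-exclusion step: the different homogeneities $2\cdot2^*_i$ of the multi-critical terms prevent a single-quotient Sobolev argument, so the fibering-map comparison (rather than a best-constant inequality) is what makes the energy accounting close; a secondary technical point is establishing the nonlocal Brezis--Lieb identities for the several critical convolution terms simultaneously.
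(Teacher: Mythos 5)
Your proposal is correct and follows essentially the same route as the paper's proof: the compactness of Lemma \ref{lem1} to extract a weak limit $u\in H^1_0(\Omega)$ solving \eqref{p1}, Br\'ezis--Lieb splitting combined with the derivative identity for $w_n=u_n-u$, and a Nehari/fibering-map comparison against $m(\mathbb{R}^N)$ to exclude a surviving bubble, which is exactly where $c<m(\mathbb{R}^N)$ enters in both arguments. The only differences are cosmetic: you project $w_n$ once, directly onto $\mathcal{N}_{\mathbb{R}^N}$ (showing the fibering maximizer satisfies $t_n\leqslant 1+o(1)$), whereas the paper first projects onto $\mathcal{N}_{\lambda_n,\mu}$ (with $t_n\to 1$) and then onto $\mathcal{N}_{\mathbb{R}^N}$, and your bookkeeping covers the theorem as stated for arbitrary solutions with $I_{\lambda_n,\mu}(u_n)\to c$, while the paper's write-up is phrased for ground states.
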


Finally, we derive from  the Lusternik-Schnirelman theory  a multiplicity result for problem \eqref{p0}.
\begin{theorem}
\label{th3}
  Suppose (V1) and (V2) hold and $N\geqslant4$. Then there exists $0<\mu^*<\mu_1$ and for each $0<\mu\leqslant\mu^*$, there is $\Lambda(\mu)>0$ such that problem \eqref{p0} has at least $cat_\Omega(\Omega)$ positive solutions whenever $\lambda \geqslant\Lambda(\mu)$.
\end{theorem}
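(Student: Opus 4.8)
The plan is to run the Benci--Cerami photography (barycenter) method on the Nehari manifold $\mathcal{N}_{\lambda,\mu}=\{u\in E\setminus\{0\}:\langle I_{\lambda,\mu}'(u),u\rangle=0\}$, exploiting the compactness already secured below the threshold $m(\mathbb{R}^N)$. Since $2<p<2^*_{\min}$ and the Choquard terms are superquadratic, $\mathcal{N}_{\lambda,\mu}$ is a natural $\mathcal{C}^1$ manifold radially diffeomorphic to the unit sphere of $E$; critical points of $I_{\lambda,\mu}$ restricted to $\mathcal{N}_{\lambda,\mu}$ are critical points of $I_{\lambda,\mu}$, and by the analysis underlying Theorems~\ref{th1}--\ref{th2} the restricted functional satisfies $(PS)_c$ for every $c<m(\mathbb{R}^N)$. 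First I would record the ground state level $c_\Omega$ of the limit problem \eqref{p1} and, following \cite{LYY}, fix $\mu^*\in(0,\mu_1)$ so small that for $0<\mu\le\mu^*$ one has the strict gap $c_\Omega<m(\mathbb{R}^N)$ together with the single-bubble energy margin used below; I would also show $c_{\lambda,\mu}:=\inf_{\mathcal{N}_{\lambda,\mu}}I_{\lambda,\mu}\to c_\Omega$ as $\lambda\to\infty$.

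The two maps are built as follows. For the photography map $\Phi_\lambda:\Omega\to\mathcal{N}_{\lambda,\mu}$ I take the extremal $U$ of \eqref{p2}, dilate it to a concentration scale $\varepsilon$, truncate it inside a small ball $B(y,r)\subset\Omega$ centered at $y\in\Omega$, and project the result onto $\mathcal{N}_{\lambda,\mu}$. Because $V\equiv0$ on $\Omega$ the penalty $\lambda\int V u^2$ vanishes on these test functions, so a standard expansion of the multi-critical energy (each Choquard term contributing at its own critical scale, and the $\mu$-term producing a strictly negative lower-order correction) yields $\sup_{y\in\Omega}I_{\lambda,\mu}(\Phi_\lambda(y))\le m(\mathbb{R}^N)-\eta_0$ for some $\eta_0>0$, uniformly for $\lambda$ large; here the hypothesis $N\ge4$ is what makes the subcritical gain dominate the truncation error. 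For the center of mass I set
\[
\beta(u)=\frac{\int_{\mathbb{R}^N}x\,|\nabla u|^2\,dx}{\int_{\mathbb{R}^N}|\nabla u|^2\,dx},
\]
and put $\Omega_\delta=\{x:\mathrm{dist}(x,\Omega)<\delta\}$ with $\delta$ small enough that $\Omega_\delta$ retracts onto $\Omega$, so that $cat_{\Omega_\delta}(\Omega)=cat_\Omega(\Omega)$.

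The heart of the argument, and the step I expect to be the main obstacle, is the concentration estimate: I would prove that there is $\eta_1\in(0,\eta_0]$ such that for $\lambda$ large every $u\in\mathcal{N}_{\lambda,\mu}$ with $I_{\lambda,\mu}(u)\le m(\mathbb{R}^N)-\eta_1$ satisfies $\beta(u)\in\Omega_\delta$. This is where the multi-critical structure bites. Arguing by contradiction along a sequence with $\beta(u_n)\notin\Omega_\delta$, I would apply a Choquard-type concentration--compactness/splitting analysis to show that such low energy forces a single concentration bubble, and then use $\lambda_n\to\infty$ together with $(V1)$--$(V2)$ (exactly as in the concentration conclusion of Theorem~\ref{th2}) to locate that bubble inside $\Omega$, contradicting $\beta(u_n)\notin\Omega_\delta$. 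Granting this, and since $\beta(\Phi_\lambda(y))\to y$ uniformly as the concentration parameter tends to $0$, the composition $\beta\circ\Phi_\lambda:\Omega\to\Omega_\delta$ is homotopic to the inclusion $\Omega\hookrightarrow\Omega_\delta$.

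Finally I would assemble the category estimate. Writing $\mathcal{N}^{d}=\{u\in\mathcal{N}_{\lambda,\mu}:I_{\lambda,\mu}(u)\le d\}$ with $d=m(\mathbb{R}^N)-\eta_1$, the homotopy above gives, through the maps $\Omega\xrightarrow{\Phi_\lambda}\mathcal{N}^{d}\xrightarrow{\beta}\Omega_\delta$, the standard inequality $cat(\mathcal{N}^{d})\ge cat_{\Omega_\delta}(\Omega)=cat_\Omega(\Omega)$. Since $(PS)_c$ holds for all $c\le d<m(\mathbb{R}^N)$, Lusternik--Schnirelman theory on the complete manifold $\mathcal{N}_{\lambda,\mu}$ produces at least $cat(\mathcal{N}^{d})\ge cat_\Omega(\Omega)$ critical points of $I_{\lambda,\mu}$, hence at least $cat_\Omega(\Omega)$ weak solutions of \eqref{p0}. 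To make them positive I would run the entire scheme on the positive cone (replacing $u$ by $|u|$, which does not raise the energy, or using the positive-part truncated nonlinearities) and conclude $u>0$ from the strong maximum principle applied to $-\Delta u+\lambda V u\ge0$. Taking $\Lambda(\mu)$ to be the largest of the thresholds furnished by the energy expansion, the concentration step, and the convergence $c_{\lambda,\mu}\to c_\Omega$ then completes the proof.
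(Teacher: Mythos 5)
Your overall architecture (photography map into a sublevel set of $\mathcal{N}_{\lambda,\mu}$, barycenter map back to a neighborhood of $\Omega$, $(PS)_c$ below $m(\mathbb{R}^N)$, Lusternik--Schnirelman theory, positivity at the end) is the same as the paper's, but two of your ingredients have genuine problems. The smaller one: your barycenter $\beta(u)=\int_{\mathbb{R}^N}x|\nabla u|^2\,dx\big/\int_{\mathbb{R}^N}|\nabla u|^2\,dx$ is in general not defined on $E$, since $x|\nabla u|^2$ need not be integrable for $u\in H^1(\mathbb{R}^N)$; the paper avoids this by inserting a cutoff $\eta\in\mathcal{C}^\infty_c(\mathbb{R}^N)$ with $\eta(x)=x$ on $\Omega$ into the (Choquard-density) barycenter. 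This is fixable. Also note that the paper never performs your truncated-bubble energy expansion: its photography map is the translated ball minimizer $v$ of $I_{\mu,B_r}$ supplied by \cite{LYY}, so that $I_{\lambda,\mu}(\gamma(y))=c_{\mu,r}<m(\mathbb{R}^N)-\tau$ holds exactly, with no expansion and no separate use of $N\geqslant4$ at that point.

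The main gap is the concentration estimate, which you yourself flag as the heart. Elements of your sublevel set $\mathcal{N}^{d}$ are neither solutions nor $(PS)$ sequences --- they are merely Nehari-constrained functions with $I_{\lambda,\mu}\leqslant d$ --- so neither ``exactly as in the concentration conclusion of Theorem~\ref{th2}'' (whose proof uses $I'_{\lambda_n,\mu}(u_{\lambda_n})=0$ throughout) nor a Struwe-type splitting (which requires $I'(u_n)\to0$) applies to them. The paper replaces these tools by a bespoke two-case argument (Proposition~\ref{prop3}): depending on whether the weak limit $u_\mu\in H^1_0(\Omega)$ satisfies $\sum_i\int\int|x-y|^{-(N-\alpha_i)}|u_\mu(x)|^{2^*_i}|u_\mu(y)|^{2^*_i}\leqslant\int|\nabla u_\mu|^2-\mu\int|u_\mu|^p$ or not, it uses only the Br\'ezis--Lieb decomposition and explicit Nehari projections ($t_nw_n$, then $s_nv_n\in\mathcal{N}_{\mathbb{R}^N}$) to reach either the contradiction $m(\mathbb{R}^N)-\tau\geqslant m(\mathbb{R}^N)+o(1)$ or a comparison with the limit problem via Lemma~\ref{lem7}. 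There is also a quantitative mismatch in your choice of level: your photographed functions are supported in balls $B(y,r)\subset\Omega$ where $V\equiv0$, so their energy is at least $c_{\mu,r}=c_{\mu,B_r}$; hence necessarily $d=m(\mathbb{R}^N)-\eta_1\geqslant c_{\mu,r}$, i.e.\ your concentration claim is at a level at least as high as the ball level, whereas the concentration input available from \cite{LYY} (the paper's Lemma~\ref{lem7}) holds exactly at level $c_{\mu,r}$ and only for $\mu$ small. Closing this requires the observation that $\eta_1(\mu)\leqslant m(\mathbb{R}^N)-c_{\mu,r}\to0$ as $\mu\to0$ --- which rests on $m(\tilde\Omega)=m(\mathbb{R}^N)$ for every domain and on $\lim_{\mu\to0}c_{\mu,r}=\lim_{\mu\to0}c_{\mu,\Omega}$ (the paper's Lemmas~\ref{lem8a} and~\ref{lem8}) --- followed by a rerun of the contradiction argument along $\mu_n\to0$; this is precisely where the smallness of $\mu^*$ enters the paper's proof, and nothing in your sketch supplies it.
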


The paper is organized as follows.  In section 2, we present preliminary results. Then we prove Theorems \ref{th1} and Theorem \ref{th2} in section 3. The multiple result in  Theorem \ref{th3} is established section 4.

\bigskip

\section{Preliminaries}

\bigskip

In the sequel, we denote by  $|u|_q$ the norm of $u$ in  $L^q(\mathbb{R}^N)$ and
$$\|u\|^2_{\lambda}=\|u\|^2_{H^1}+\lambda\int_{\mathbb{R}^N}V(x)u^2\,dx$$for $\lambda>0$, which is equivalent to that defined in \eqref{eq:1.5}.

Now, by the Hardy-Littlewood-Sobolev inequality in  \cite{Lieb}, we see that the functionals $I_{\lambda, \mu}(u)$ and $J(u)$ are well-defined and differentiable in $E$ and $D^{1,2}(\mathbb{R}^N)$ respectively.
\begin{lemma}\label{lem1}
  Let $\lambda_n\geqslant1$ and $u_n\in E$ be such that $\|u_n\|^2_{\lambda_n}\leqslant K$ for $\lambda_n\to\infty$. Then there is a $u\in H^1_0(\Omega)$ such that, up to a subsequence, $u_n\rightharpoonup u$ weakly in $E$ and $u_n\to u$ in $L^2(\mathbb{R}^N)$. Moreover, if $u=0$, we have $u_n\to 0 $ in $L^p(\mathbb{R}^N)$ for $2\leqslant p<\frac{2N}{N-2}$.
\end{lemma}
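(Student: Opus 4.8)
The plan is to obtain the weak limit from an $H^1$-bound, identify it as an $H^1_0(\Omega)$-function through the weighted term, and then promote weak to strong $L^2$-convergence by a tail estimate resting on $(V2)$. First I would note that since $\lambda_n\geqslant 1$, the hypothesis $\|u_n\|^2_{\lambda_n}\leqslant K$ forces $\|u_n\|^2_{H^1}\leqslant K$ and $\int_{\mathbb{R}^N}V(x)u_n^2\,dx\leqslant K/\lambda_n\leqslant K$, so $\{u_n\}$ is bounded both in $H^1(\mathbb{R}^N)$ and in $E$. Passing to a subsequence, $u_n\rightharpoonup u$ weakly in $E$, hence weakly in $H^1(\mathbb{R}^N)$ (the inclusion $E\hookrightarrow H^1$ being continuous, so weak $E$-limit and weak $H^1$-limit coincide), and $u_n\to u$ in $L^2_{loc}(\mathbb{R}^N)$ by Rellich--Kondrachov.

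To see that $u\in H^1_0(\Omega)$, I would use that $\int_{\mathbb{R}^N}V(x)u_n^2\,dx\leqslant K/\lambda_n\to 0$; by weak lower semicontinuity of $u\mapsto\int_{\mathbb{R}^N} V u^2$ together with $V\geqslant 0$ from $(V1)$, this gives $\int_{\mathbb{R}^N}V(x)u^2\,dx=0$, so $u=0$ a.e. on $\{V>0\}$, i.e. $u$ vanishes outside $\overline{V^{-1}(0)}=\overline{\Omega}$. Since $u\in H^1(\mathbb{R}^N)$ vanishes a.e. outside $\overline{\Omega}$ and $\partial\Omega$ is smooth, the standard characterisation of $H^1_0$ yields $u\in H^1_0(\Omega)$; in particular $\int_{\mathbb{R}^N} V u^2=0<\infty$, so $u\in E$, consistent with the weak limit.

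The core of the argument, and the step I expect to be the main obstacle, is the strong convergence $u_n\to u$ in $L^2(\mathbb{R}^N)$, since one must control the mass of $u_n$ at infinity uniformly in $n$. I would split $\mathbb{R}^N=B_R\cup(A\setminus B_R)\cup(A^c\setminus B_R)$, where $A:=\{x:V(x)\leqslant M_0\}$ and $B_R$ is a ball of large radius $R$. On $A^c$ the weight helps: $\int_{A^c}u_n^2\,dx\leqslant M_0^{-1}\int_{A^c}V u_n^2\,dx\leqslant (M_0\lambda_n)^{-1}K\to 0$. On $A\setminus B_R$ I would invoke $(V2)$, namely $|A|<\infty$ so that $|A\setminus B_R|\to 0$ as $R\to\infty$, combined with the uniform $L^{2^*}$-bound $|u_n|_{2^*}\leqslant C\|u_n\|_{H^1}\leqslant C\sqrt{K}$ and H\"older's inequality, to obtain $\int_{A\setminus B_R}u_n^2\,dx\leqslant |A\setminus B_R|^{2/N}|u_n|_{2^*}^2\leqslant C'|A\setminus B_R|^{2/N}$, which is small uniformly in $n$ once $R$ is large. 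Finally, on the fixed ball $B_R$, Rellich gives $\int_{B_R}|u_n-u|^2\,dx\to0$. Combining the three estimates, and using that $u=0$ on $A^c$, yields $u_n\to u$ in $L^2(\mathbb{R}^N)$.

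For the last assertion, if $u=0$ then $|u_n|_2\to 0$ while $|u_n|_{2^*}$ stays bounded. For $2<p<\tfrac{2N}{N-2}$ I would interpolate, writing $\tfrac1p=\tfrac{\theta}{2}+\tfrac{1-\theta}{2^*}$ with $\theta\in(0,1)$, so that $|u_n|_p\leqslant|u_n|_2^{\theta}|u_n|_{2^*}^{1-\theta}\to0$; the case $p=2$ is immediate. This completes the plan.
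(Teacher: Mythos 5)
Your proposal is correct and follows essentially the same route as the paper's proof: the same three-region decomposition (a large ball handled by Rellich, the far-out part of the finite-measure set $\{V\leqslant M_0\}$ handled via $(V2)$, H\"older and the Sobolev bound, and the region where $V$ is bounded below killed by the bound $\lambda_n\int_{\mathbb{R}^N} Vu_n^2\,dx\leqslant K$), followed by interpolation for the $L^p$ assertion. The only cosmetic differences are that the paper identifies $u\in H^1_0(\Omega)$ by showing $u_n\to 0$ strongly in $L^2(\{V\geqslant \frac1m\})$ for each $m$ rather than by weak lower semicontinuity of $u\mapsto\int_{\mathbb{R}^N} Vu^2\,dx$, and it phrases the final interpolation in Gagliardo--Nirenberg form with the gradient instead of Lebesgue interpolation between $L^2$ and $L^{2N/(N-2)}$.
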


\begin{proof}
  Since for $\lambda_n\geqslant1$,  $\|u_n\|^2\leqslant\|u_n\|^2_{\lambda_n}\leqslant K$, we may assume $u_n\rightharpoonup u$ weakly in $E$ and $u_n\to u$ in $L^2_{loc}(\mathbb{R}^N)$. Set $C_m:=\{x: V(x)\geqslant\frac{1}{m}\}$ for $m\in \mathbb{N}$. Then
  $$\mathbb{R}^N\backslash\overline{\Omega}=\bigcup^{+\infty}_{m=1}C_m,$$
  and for all $m\in \mathbb{N}$,
  $$\int_{C_m}|u_n|^2\,dx\leqslant m\int_{C_m}V|u_n|^2\,dx =\frac{m}{\lambda_n}\int_{C_m}\lambda_nV|u_n|^2\,dx \leqslant\frac{m}{\lambda_n}\|u_n\|^2_{\lambda_n} {\color{red}\leqslant}\frac{m}{\lambda_n}K.$$
Hence $u_n\to 0$ in $L^2(C_m)$ as $n\to\infty$. By a diagonal process, we may assume that $u_n\to 0$ in $L^2(\mathbb{R}^N\backslash\overline{\Omega})$ as $n\to\infty$.

In order to show that $u_n\to u$ in $L^2(\mathbb{R}^N)$, we set for $R>0$ that
  $$A(R):=\{x\in\mathbb{R}^N:|x|>R,V(x)\geqslant M_0\}$$
  and
  $$B(R):=\{x\in\mathbb{R}^N:|x|>R,V(x)< M_0\},$$
  where $M_0$ is given in $(V2)$.
 Since  $A(R)\subset C_m$ if $m$ large, we have
  \begin{equation}\label{eq6}
      \int_{A(R)}|u_n|^2\,dx\leqslant\int_{C_m}|u_n|^2\,dx\to 0, \text{ as } n\to\infty.
  \end{equation}
  By the H\"older inequality,  for $1<p<\frac{N}{N-2},\frac 1p +\frac 1{p'} =1$ there holds
  \begin{align*}
    \int_{B(R)}|u_n-u|^2\,dx&\leqslant\left(\int_{\mathbb{R^N}}|u_n-u|^{2p}\,dx\right)^{\frac{1}{p}} |B(R)|^{\frac{1}{p'}}
    \leqslant C\|u_n-u\|^2_{\lambda_n}|B(R)|^{\frac{1}{p'}}=o_R(1),
  \end{align*}
  where $o_R(1)\to0$ as $R\to\infty$. Then, for $\varepsilon>0$, there exists $N>0$ such that for $n>N$,
  $$\int_{\mathbb{R}^N}|u_n-u|^2\,dx\leqslant\int_{B_R}|u_n-u|^2\,dx+\int_{A(R)}|u_n|^2\,dx+\int_{A(R)}|u|^2\,dx<\varepsilon.$$
  It yields that $u_n\to u$ in $L^2(\mathbb{R}^N)$.

Now, we turn to the last assertion. If $u_n\rightharpoonup 0$, for $2<p< \frac{2N}{N-2}$ and any subset $\tilde\Omega\subset \mathbb{R}^N$, the interpolation inequality yields
  \begin{equation}\label{eq8}
    \begin{aligned}
    \int_{\tilde \Omega}|u_n|^p\,dx&\leqslant C\left(\int_{\tilde \Omega}|\nabla u_n|^{2}\,dx\right)^{\frac{\theta p}{2}}\left(\int_{\tilde\Omega}|u_n|^2\,dx\right)^{\frac{(1-\theta)p}{2}}\\
    &\leqslant C\|u_n\|_{\lambda_n}^{\theta p}\left(\int_{A(R)}u_n^2\,dx+\int_{B(R)}u_n^2\,dx\right)^{\frac{(1-\theta)p}{2}}<\varepsilon,
  \end{aligned}
  \end{equation}
 for $n$ large, where $\theta=\frac{N(p-2)}{2p}$. Choosing $\tilde\Omega$ to be $A(R)$ and $B(R)$ respectively, we obtain $u_n\to 0$ in $L^p(\mathbb{R}^N)$ as $n\to\infty$.
\end{proof}

\bigskip

Let $L_{\lambda}:=-\Delta+\lambda V$ be the self-adjoint operator acting on $L^2(\mathbb{R})$ with the domain $E$ for all $\lambda\geqslant0$. We denote by $(\cdot,\cdot)$ the $L^2$-inner product and write
$$(L_{\lambda}u,v)=\int_{\mathbb{R}^N}(\nabla u\nabla v+\lambda V(x)uv)\,dx$$
for $u,v\in E$. It implies that
\begin{equation}\label{equ1}
    (L_{\lambda}u,u)=\int_{\mathbb{R}^N}(|\nabla u|^2+\lambda V(x)u^2)\,dx\leqslant \|u\|_{\lambda}^2.
\end{equation}
Denote by $a_{\lambda}:=\inf \sigma(L_{\lambda})$ the infimum of the spectrum of $L_{\lambda}$. It follows that
$$0\leqslant a_{\lambda}=\inf\{(L_{\lambda}u,u):u\in E,|u|_2=1\}$$
which is nondecreasing about the parameter $\lambda$. It is proved in \cite{Ding} the following result.
\begin{lemma}\label{lem2}
  For every $0<\mu<\mu_1$, there exists $\lambda(\mu)>0$ such that $$a_{\lambda}\geqslant\frac{\mu+\mu_1}{2}$$
  if  $\lambda\geqslant\lambda(\mu)$. Moreover,  for all $u\in E$ and $\lambda\geqslant\lambda(\mu)$ there exists a constant $C>0$ such that
  $$C\|u\|^2_{\lambda}\leqslant(L_{\lambda}u,u),$$
 where $C{\color{red}\leqslant}(\mu+\mu_1)/(2+\mu+\mu_1)$.
\end{lemma}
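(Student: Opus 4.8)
The plan is to derive the spectral lower bound $a_\lambda\geq\frac{\mu+\mu_1}{2}$ for large $\lambda$ from a confinement/compactness argument, and then to obtain the coercivity estimate as a purely algebraic consequence. The underlying mechanism is that as $\lambda\to\infty$ the potential $\lambda V$ suppresses mass outside the well $\Omega=\mathrm{int}\,V^{-1}(0)$, on which $L_\lambda$ reduces to the Dirichlet Laplacian with bottom eigenvalue $\mu_1$. As a first observation, extending the first Dirichlet eigenfunction of $-\Delta$ on $\Omega$ by zero yields a function in $H^1_0(\Omega)\subset E$ on which $V$ vanishes, so $a_\lambda\leq\mu_1$ for every $\lambda\geq 0$; together with the monotonicity of $\lambda\mapsto a_\lambda$ noted above, it remains only to push $a_\lambda$ up to $\mu_1$.

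To prove $\liminf_{\lambda\to\infty}a_\lambda\geq\mu_1$ I would argue by contradiction. If it failed, then by monotonicity $a_\lambda\to a<\mu_1$, and for $\lambda_n\to\infty$ I could select near-minimizers $u_n\in E$ with $|u_n|_2=1$ and $(L_{\lambda_n}u_n,u_n)\to a$. Since $\int_{\mathbb{R}^N}|\nabla u_n|^2$ and $\lambda_n\int_{\mathbb{R}^N}V u_n^2$ are nonnegative and bounded by $(L_{\lambda_n}u_n,u_n)$, the bound $\|u_n\|_{\lambda_n}^2=(L_{\lambda_n}u_n,u_n)+|u_n|_2^2\leq K$ holds, so Lemma \ref{lem1} yields, after passing to a subsequence, some $u\in H^1_0(\Omega)$ with $u_n\rightharpoonup u$ in $E$ and $u_n\to u$ in $L^2(\mathbb{R}^N)$; the latter forces $|u|_2=1$, so $u\neq 0$. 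Weak lower semicontinuity of the Dirichlet integral gives $\int_{\mathbb{R}^N}|\nabla u|^2\leq\liminf\int_{\mathbb{R}^N}|\nabla u_n|^2\leq a<\mu_1$, while the variational characterization of $\mu_1$ applied to $u\in H^1_0(\Omega)$ with $|u|_2=1$ gives $\int_{\mathbb{R}^N}|\nabla u|^2=\int_\Omega|\nabla u|^2\geq\mu_1$, a contradiction. Hence $\liminf_{\lambda\to\infty}a_\lambda\geq\mu_1>\frac{\mu+\mu_1}{2}$, and by monotonicity there is $\lambda(\mu)>0$ with $a_\lambda\geq\frac{\mu+\mu_1}{2}$ for all $\lambda\geq\lambda(\mu)$.

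The second assertion is then elementary. For $u\in E$ one has $\|u\|_\lambda^2=(L_\lambda u,u)+|u|_2^2$, and the definition of $a_\lambda$ gives $(L_\lambda u,u)\geq a_\lambda|u|_2^2\geq\frac{\mu+\mu_1}{2}|u|_2^2$ whenever $\lambda\geq\lambda(\mu)$. Setting $C=\frac{\mu+\mu_1}{2+\mu+\mu_1}$, so that $\frac{C}{1-C}=\frac{\mu+\mu_1}{2}$, the estimate $(L_\lambda u,u)\geq\frac{C}{1-C}|u|_2^2$ rearranges to $C\big((L_\lambda u,u)+|u|_2^2\big)\leq(L_\lambda u,u)$, that is $C\|u\|_\lambda^2\leq(L_\lambda u,u)$; since the left-hand side only decreases when $C$ is taken smaller, the inequality persists for every $C\leq\frac{\mu+\mu_1}{2+\mu+\mu_1}$, which is the asserted bound.

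The main obstacle is the compactness step. The entire contradiction hinges on the weak limit $u$ being simultaneously nonzero and supported in $\overline{\Omega}$, so that $\mu_1$ can be invoked, and these are precisely the conclusions of Lemma \ref{lem1}, whose proof crucially uses hypothesis $(V2)$: the finiteness of $\{V\leq M_0\}$ is what prevents mass from leaking into the region far from $\Omega$ where $V$ is small but nonzero, upgrading $L^2_{loc}$ convergence to global $L^2$ convergence and thereby preserving the normalization $|u_n|_2=1$ in the limit. Without this global convergence one could not exclude $u=0$, and the argument would collapse.
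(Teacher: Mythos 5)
Your proof is correct. Note that the paper contains no proof of this lemma at all --- it is quoted from \cite{Ding} --- and your argument (near-minimizers $u_n$ with $|u_n|_2=1$ and $(L_{\lambda_n}u_n,u_n)\to a<\mu_1$, the compactness of Lemma~\ref{lem1}, which indeed rests on $(V2)$, to force $u_n\to u$ in $L^2(\mathbb{R}^N)$ with $u\in H^1_0(\Omega)$ and $|u|_2=1$, weak lower semicontinuity of the Dirichlet integral against the variational characterization of $\mu_1$, and then the rearrangement of $(L_\lambda u,u)\geqslant \tfrac{C}{1-C}|u|_2^2$ with $C=(\mu+\mu_1)/(2+\mu+\mu_1)$ into $C\|u\|_\lambda^2\leqslant (L_\lambda u,u)$) is exactly the standard proof given in that reference, fully consistent with the toolkit this paper sets up.
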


\bigskip

\bigskip

\section{Positive solutions and concentration}

\bigskip

\ \ In this section, we will prove the existence of positive solution for  problem \eqref{p0} and investigate  the limit behavior of the solution as the parameter $\lambda\to \infty$.

\begin{lemma}
\label{lem3}
  If $\mu\in(0,\mu_1)$ and $\lambda\geqslant\lambda(\mu)$, then every $(PS)_c$ sequence $\{u_n\}\subset E$ for $I_{\lambda,\mu}$ is bounded in $E$.
\end{lemma}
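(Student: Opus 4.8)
The plan is to show boundedness of a $(PS)_c$ sequence by the standard Nehari-type manifold argument, exploiting that all the nonlinear exponents ($p$ and the $2^*_i$) exceed $2$. Let $\{u_n\}\subset E$ satisfy $I_{\lambda,\mu}(u_n)\to c$ and $I'_{\lambda,\mu}(u_n)\to 0$. The key quantity is the combination $I_{\lambda,\mu}(u_n)-\frac{1}{2^*_{\min}}\langle I'_{\lambda,\mu}(u_n),u_n\rangle$, which is engineered so that the purely quadratic form $(L_\lambda u_n,u_n)=\int_{\mathbb{R}^N}(|\nabla u_n|^2+\lambda V u_n^2)\,dx$ survives with a strictly positive coefficient while the Choquard and $L^p$ terms acquire nonnegative coefficients.

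Concretely, first I would compute the coefficients. The quadratic part contributes $(\tfrac12-\tfrac1{2^*_{\min}})(L_\lambda u_n,u_n)$; the $L^p$ term contributes $\mu(\tfrac1{2^*_{\min}}-\tfrac1p)|u_n|_p^p$; and each Choquard term contributes $(\tfrac1{2^*_{\min}}-\tfrac1{2\cdot 2^*_i})$ times the corresponding double integral. Since $p<2^*_{\min}$ the coefficient $\tfrac1{2^*_{\min}}-\tfrac1p$ is negative, so the $L^p$ term must be handled carefully; however, because $\tfrac1{2^*_{\min}}\le\tfrac1{2\cdot 2^*_i}$ is false in general (note $2^*_i\ge 2^*_{\min}$ gives $\tfrac{1}{2\cdot 2^*_i}\le \tfrac{1}{2\cdot 2^*_{\min}}\le \tfrac{1}{2^*_{\min}}$), each Choquard coefficient $\tfrac{1}{2^*_{\min}}-\tfrac{1}{2\cdot 2^*_i}$ is in fact strictly positive, so those terms may be discarded as nonnegative. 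The troublesome $L^p$ term is then controlled by interpolating: by Lemma~\ref{lem2}, $(L_\lambda u_n,u_n)\ge C\|u_n\|_\lambda^2$, and $|u_n|_p^p$ is bounded above by $\|u_n\|_\lambda^p$ via the Sobolev/continuous embedding $E\hookrightarrow L^p(\mathbb{R}^N)$ for $2<p<2^*=\tfrac{2N}{N-2}$.

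Thus I would arrive at an inequality of the schematic form
\begin{equation*}
c+o(1)+o(1)\|u_n\|_\lambda \;\ge\; \Big(\tfrac12-\tfrac1{2^*_{\min}}\Big)C\|u_n\|_\lambda^2 - C'\|u_n\|_\lambda^p,
\end{equation*}
where the left side uses $I_{\lambda,\mu}(u_n)=c+o(1)$ and $|\langle I'_{\lambda,\mu}(u_n),u_n\rangle|\le \|I'_{\lambda,\mu}(u_n)\|\,\|u_n\|_\lambda=o(1)\|u_n\|_\lambda$. The main obstacle is precisely this lower-order subcritical term $-C'\|u_n\|_\lambda^p$: since $2<p<2^*_{\min}$, its coefficient in the Pohozaev-Nehari combination is negative and it is not automatically absorbable. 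The resolution is that $p>2$, so the dominant balance on the right is between the quadratic $\|u_n\|_\lambda^2$ and the $p$-power term, and a slightly different scalar multiple of $I'$ or a two-parameter split must be chosen to keep the leading coefficient of $\|u_n\|_\lambda^2$ positive after subtracting the $L^p$ contribution.

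A cleaner route, which I would actually adopt, is to form $I_{\lambda,\mu}(u_n)-\tfrac1p\langle I'_{\lambda,\mu}(u_n),u_n\rangle$ instead. Then the $L^p$ term cancels exactly, the quadratic part keeps coefficient $(\tfrac12-\tfrac1p)>0$, and each Choquard term keeps coefficient $(\tfrac1p-\tfrac1{2\cdot 2^*_i})$, which is positive because $p<2^*_{\min}\le 2^*_i<2\cdot 2^*_i$. Hence
\begin{equation*}
\Big(\tfrac12-\tfrac1p\Big)(L_\lambda u_n,u_n)\;\le\; I_{\lambda,\mu}(u_n)-\tfrac1p\langle I'_{\lambda,\mu}(u_n),u_n\rangle\;=\;c+o(1)+o(1)\|u_n\|_\lambda,
\end{equation*}
and combining with $(L_\lambda u_n,u_n)\ge C\|u_n\|_\lambda^2$ from Lemma~\ref{lem2} yields $\|u_n\|_\lambda^2\le C''(1+\|u_n\|_\lambda)$, which forces $\{u_n\}$ to be bounded in $(E,\|\cdot\|_\lambda)$, hence in $E$. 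In this formulation there is no genuine obstacle beyond verifying the sign of each coefficient, so the real content is the algebraic observation that choosing the multiplier $1/p$ rather than $1/2^*_{\min}$ simultaneously eliminates the subcritical term and leaves all remaining coefficients nonnegative.
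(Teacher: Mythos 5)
Your final argument --- forming $I_{\lambda,\mu}(u_n)-\tfrac1p\langle I'_{\lambda,\mu}(u_n),u_n\rangle$ so that the $L^p$ term cancels exactly, the quadratic part keeps coefficient $\tfrac12-\tfrac1p>0$, each Choquard coefficient $\tfrac1p-\tfrac1{2\cdot 2^*_i}$ is positive, and Lemma~\ref{lem2} gives $(L_\lambda u_n,u_n)\geqslant C\|u_n\|_\lambda^2$ --- is correct and is precisely the paper's proof. Your preliminary detour through the multiplier $1/2^*_{\min}$ is unnecessary but harmless, since you discard it in favor of the same combination the paper uses.
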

\begin{proof}
 Since  $2<p<2^*_{min}$, by Lemma \ref{lem2} we have
  \begin{equation*}
    \begin{aligned}
    c+1+\|u_n\|_{\lambda}&\geqslant I_{\lambda,\mu}(u_n)-\frac{1}{p}\langle I'_{\lambda,\mu}(u_n),u_n\rangle \\
    &\geqslant(\frac{1}{2}-\frac{1}{p})(L_\lambda u_n,u_n) +\sum\limits_{i=1}^k(\frac{1}{p}-\frac{1}{2\cdot2^*_i})\int_{\mathbb{R}^N}(|x|^{-(N-\alpha_i)}*|u_n|^{2^*_i})|u_n|^{2^*_i}\,dx\\
    &\geqslant(\frac{1}{2}-\frac{1}{p})C\|u_n\|_\lambda^2.
  \end{aligned}
  \end{equation*}
  This implies that $\{u_n\}$ is bounded in $E$.
\end{proof}

Now we verify that $ I_{\lambda,\mu}(u)$ has the mountain pass  geometry.

\begin{lemma}\label{lem5}
  For all $\mu\in(0,\mu_1)$ and $\lambda\geqslant\lambda(\mu)$, the functional $I_{\lambda,\mu}$ satisfies that

 \indent $(i)$\ $I_{\lambda,\mu}(0)=0$ and there exist $\rho,\delta>0$, such that $\inf\limits_{\|u\|=\rho}I_{\lambda,\mu}(u)\geqslant\delta>0$;

$(ii)$ there exists $e\in E$ such that $\|e\|_\lambda >\rho$ and $I_{\lambda,\mu}(e)\leqslant 0.$
\end{lemma}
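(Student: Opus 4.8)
The plan is to verify the two standard mountain-pass conditions directly from the structure of the functional $I_{\lambda,\mu}$ defined in \eqref{I0}. The point $I_{\lambda,\mu}(0)=0$ is immediate since every term vanishes at $u=0$. For part $(i)$, I would estimate $I_{\lambda,\mu}(u)$ from below on a small sphere $\|u\|_\lambda=\rho$. Using Lemma \ref{lem2} to bound the quadratic part from below by $C\|u\|_\lambda^2$, and using the Sobolev embedding to control the $L^p$ term by $C_1\|u\|_\lambda^p$ (recall $2<p<2^*_{\min}\le\frac{2N}{N-2}$) and the Hardy--Littlewood--Sobolev inequality to control each convolution term by $C_2\|u\|_\lambda^{2\cdot 2^*_i}$, I would obtain a lower bound of the form
\begin{equation*}
I_{\lambda,\mu}(u)\geqslant \frac{C}{2}\|u\|_\lambda^2-\tfrac{\mu}{p}C_1\|u\|_\lambda^p-\sum_{i=1}^k C_2\|u\|_\lambda^{2\cdot 2^*_i}.
\end{equation*}
Since $p>2$ and $2\cdot 2^*_i>2$ for every $i$, the quadratic term dominates for $\|u\|_\lambda$ small, so there exist $\rho,\delta>0$ with $\inf_{\|u\|_\lambda=\rho}I_{\lambda,\mu}(u)\ge\delta>0$.

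For part $(ii)$, I would fix any nonzero $u_0\in E$ (for instance $u_0\in C_c^\infty(\Omega)$, $u_0\ge 0$) and examine $t\mapsto I_{\lambda,\mu}(tu_0)$ for $t>0$. Expanding,
\begin{equation*}
I_{\lambda,\mu}(tu_0)=\frac{t^2}{2}(L_\lambda u_0,u_0)-\frac{\mu t^p}{p}\int_{\mathbb{R}^N}|u_0|^p\,dx-\sum_{i=1}^k\frac{t^{2\cdot 2^*_i}}{2\cdot 2^*_i}\int_{\mathbb{R}^N}(|x|^{-(N-\alpha_i)}*|u_0|^{2^*_i})|u_0|^{2^*_i}\,dx.
\end{equation*}
The leading power as $t\to+\infty$ is $t^{2\cdot 2^*_{\max}}$ (coming from the convolution terms), whose coefficient is strictly negative since the Hardy--Littlewood--Sobolev integrals are positive for $u_0\not\equiv 0$. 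Hence $I_{\lambda,\mu}(tu_0)\to-\infty$ as $t\to+\infty$, and choosing $e=t_0 u_0$ with $t_0$ large enough gives both $\|e\|_\lambda>\rho$ and $I_{\lambda,\mu}(e)\le 0$.

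I do not expect a genuine obstacle here, as both conditions follow from the homogeneity exponents and the coercivity furnished by Lemma \ref{lem2}; this lemma is the reason the claim holds uniformly for all $\lambda\ge\lambda(\mu)$, since it provides the $\lambda$-independent constant $C$ bounding $(L_\lambda u,u)$ below by $C\|u\|_\lambda^2$. The only point requiring mild care is confirming that the positivity of the convolution energies
\begin{equation*}
\int_{\mathbb{R}^N}(|x|^{-(N-\alpha_i)}*|u_0|^{2^*_i})|u_0|^{2^*_i}\,dx>0
\end{equation*}
holds for nonzero $u_0$, which is clear since the Riesz kernel $|x|^{-(N-\alpha_i)}$ is positive, so each double integral of a positive integrand is strictly positive. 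The choice $e=t_0u_0$ can then be made independent of $\lambda$ up to enlarging $t_0$, completing both parts.
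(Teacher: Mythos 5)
Your proof is correct and follows essentially the same route as the paper: for part $(i)$ the paper likewise combines the coercivity of $(L_\lambda u,u)$ from Lemma \ref{lem2} with the Sobolev and Hardy--Littlewood--Sobolev inequalities to get $I_{\lambda,\mu}(u)\geqslant C\bigl(\|u\|_\lambda^2-\|u\|_\lambda^p-\sum_{i=1}^k\|u\|_\lambda^{2\cdot 2^*_i}\bigr)$ and then takes $\rho$ small, and for part $(ii)$ it likewise observes that $I_{\lambda,\mu}(tu_0)\to-\infty$ as $t\to+\infty$ for any fixed $u_0\neq 0$. You merely spell out details (the strict positivity of the convolution integrals, the choice $u_0\in C_c^\infty(\Omega)$) that the paper's terse proof leaves implicit.
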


\begin{proof} By the Hardy-Littlewood-Sobolev inequality, we have
$$
I_{\lambda,\mu}(u)\geqslant C(\|u\|_\lambda^2-\|u\|_\lambda^p-\sum\limits_{i=1}^k\|u\|_\lambda^{2\cdot2^*_i}).
$$
Then, $(i)$ follows by choosing $\|u\|_\lambda=\rho$ sufficiently small. $(ii)$ is valid since for any fixed $u\in E\backslash\{0\}$,
$I_{\lambda,\mu}(tu)\to -\infty$ as $t\to+\infty$.
\end{proof}

\bigskip

By Lemma \ref{lem5} and the mountain pass theorem, we know that there exists a $(PS)_c$ sequence of the functional $I_{\lambda,\mu}(u)$ at the mountain pass level
$$c_{\lambda,\mu}:=\inf\limits_{\gamma\in \Gamma_\lambda}\max_{t\in[0,1]}I_{\lambda,\mu}(\gamma(t)),$$
where
$$
\Gamma_\lambda:=\{\gamma\in\mathcal{C}([0,1],H^1(\mathbb{R}^N)):\gamma(0)=0,I_{\lambda,\mu}(\gamma(1))\leqslant0,\gamma(1)\neq0\}.
$$
Define
$$
m_{\lambda,\mu}:=\inf\limits_{u\in\mathcal{N}_{\lambda,\mu}}I_{\lambda,\mu}(u),
$$
and
$$
c_{\lambda,\mu}^s:=\inf\limits_{u\in H^1(\mathbb{R}^N)\backslash\{0\}}\sup_{t>0}I_{\lambda,\mu}(tu).
$$
We may show as \cite{willem} that
\begin{equation}\label{Equal}
m_{\lambda,\mu}=c_{\lambda,\mu}=c_{\lambda,\mu}^s.
\end{equation}

In the same spirit, we may define for the functional
\begin{equation}\label{I1}
    I_{\mu,\Omega}(u):=\frac{1}{2}\int_{\Omega}|\nabla u|^2\,dx-\frac{1}{p}\int_{\Omega}\mu|u|^p\,dx-\sum\limits_{i=1}^k\frac{1}{2  2^*_i}\int_{\Omega}\int_{\Omega}\frac{|u(x)|^{2^*_i}|u(y)|^{2^*_i}}{|x-y|^s}\,dxdy
\end{equation}
associated with problem \eqref{p1} the mountain pass level
$$
c_{\mu,\Omega}:=\inf\limits_{\gamma\in \Gamma_0}\max_{t\in[0,1]}I_{\mu,\Omega}(\gamma(t)),
$$
where
$$
\Gamma_0:=\{\gamma\in\mathcal{C}([0,1],H^1_0(\Omega)):\gamma(0)=0,I_{\mu,\Omega}(\gamma(1))\leqslant0,\gamma(1)\neq0\},
$$
and
$$
c_{\mu,\Omega}^s:=\inf\limits_{u\in H^1_0(\Omega)\backslash\{0\}}\sup_{t>0}I_{\mu,\Omega}(tu)
$$
as well as
\begin{equation}\label{m1}
    m_{\mu,\Omega}:=\inf\{I_{\mu,\Omega}(u):u\in\mathcal{N}_{\mu,\Omega}\},
\end{equation}
where
\begin{equation}\label{N1}
    \mathcal{N}_{\mu,\Omega}:=\{u\in H_0^1(\Omega)\backslash\{0\}:\langle I_{\mu,\Omega}'(u),u\rangle\}
\end{equation}
is corresponding Nehari manifold. Similarly,
$$m_{\mu,\Omega}=c_{\mu,\Omega}=c_{\mu,\Omega}^s.$$

\bigskip

\begin{lemma}\label{lem6}
  There exist $\tau=\tau(\mu)>0$ and a constant $\sigma(\mu)>0$ such that the mountain pass level $c_{\lambda,\mu}$ of $I_{\lambda,\mu}$ satisfies that
  $$\sigma(\mu)\leqslant c_{\lambda,\mu}<m(\mathbb{R}^N)-\tau$$
  for all $\lambda>0.$
\end{lemma}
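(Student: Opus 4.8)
The plan is to prove the two inequalities separately, exploiting the variational characterizations $m_{\lambda,\mu}=c_{\lambda,\mu}=c_{\lambda,\mu}^s$ in \eqref{Equal} and $m_{\mu,\Omega}=c_{\mu,\Omega}=c_{\mu,\Omega}^s$, together with the fact that the potential $V$ vanishes identically on $\overline{\Omega}=V^{-1}(0)$.

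For the lower bound, I would read off $\sigma(\mu)$ directly from the mountain pass geometry of Lemma \ref{lem5}. Every path $\gamma\in\Gamma_\lambda$ joining $0$ to a point of negative energy must cross the sphere $\{\|u\|_\lambda=\rho\}$, so
$$c_{\lambda,\mu}=\inf_{\gamma\in\Gamma_\lambda}\max_{t\in[0,1]}I_{\lambda,\mu}(\gamma(t))\geqslant\inf_{\|u\|_\lambda=\rho}I_{\lambda,\mu}(u)\geqslant\delta.$$
The point to verify is that $\delta$ may be chosen independently of $\lambda$: the coercivity constant $C$ of Lemma \ref{lem2} is uniform for $\lambda\geqslant\lambda(\mu)$ (indeed $C\leqslant(\mu+\mu_1)/(2+\mu+\mu_1)$), while the Sobolev and Hardy--Littlewood--Sobolev constants entering the estimate $I_{\lambda,\mu}(u)\geqslant C(\|u\|_\lambda^2-\|u\|_\lambda^p-\sum_{i}\|u\|_\lambda^{2\cdot2^*_i})$ depend only on the embedding $\|\cdot\|_{H^1}\leqslant\|\cdot\|_\lambda$, hence not on $\lambda$. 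Thus $\sigma(\mu):=\delta>0$ works for every admissible $\lambda$.

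The upper bound is where the structure of the problem is used. Since $(V1)$ gives $V\equiv0$ on $\overline{\Omega}$, any $u\in H_0^1(\Omega)$, extended by zero to $\mathbb{R}^N$, satisfies $\int_{\mathbb{R}^N}\lambda V u^2\,dx=0$ for every $\lambda$, so that $I_{\lambda,\mu}(u)=I_{\mu,\Omega}(u)$ and $\sup_{t>0}I_{\lambda,\mu}(tu)=\sup_{t>0}I_{\mu,\Omega}(tu)$. Taking the infimum over $H_0^1(\Omega)\setminus\{0\}\subset H^1(\mathbb{R}^N)\setminus\{0\}$ and using the characterizations above then yields
$$c_{\lambda,\mu}=c_{\lambda,\mu}^s\leqslant\inf_{u\in H_0^1(\Omega)\setminus\{0\}}\sup_{t>0}I_{\mu,\Omega}(tu)=c_{\mu,\Omega}^s=m_{\mu,\Omega},$$
and this bound is manifestly uniform in $\lambda$. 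It therefore remains only to produce a $\lambda$-independent gap between $m_{\mu,\Omega}$ and $m(\mathbb{R}^N)$.

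The main obstacle is precisely the strict inequality $m_{\mu,\Omega}<m(\mathbb{R}^N)$, a Brezis--Nirenberg type estimate: the subcritical term $\tfrac{\mu}{p}\int|u|^p$ with $2<p<2^*_{min}$ must strictly lower the mountain pass level below the critical threshold carried by the Choquard nonlinearities. I would establish it by testing $m_{\mu,\Omega}=c_{\mu,\Omega}^s$ with a suitably cut-off and rescaled instanton $U_\varepsilon$ concentrated at an interior point of $\Omega$, where $U$ is the extremal of \eqref{p2} realizing $m(\mathbb{R}^N)$; the leading order of $\sup_{t>0}I_{\mu,\Omega}(tU_\varepsilon)$ is $m(\mathbb{R}^N)$, the perturbation contributes a strictly negative term whose order in $\varepsilon$ is lower than that of the cut-off error as $\varepsilon\to0$, provided $N\geqslant4$ and $2<p<2^*_{min}$. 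This is exactly the computation carried out in \cite{LYY} for problem \eqref{w4}, on which we rely. Granting $m_{\mu,\Omega}<m(\mathbb{R}^N)$, we set $\tau=\tau(\mu):=\tfrac12\bigl(m(\mathbb{R}^N)-m_{\mu,\Omega}\bigr)>0$, so that $c_{\lambda,\mu}\leqslant m_{\mu,\Omega}=m(\mathbb{R}^N)-2\tau<m(\mathbb{R}^N)-\tau$ for all $\lambda$, which completes the proof.
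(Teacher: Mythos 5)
Your proposal is correct and follows essentially the same route as the paper: the upper bound comes from comparing $c_{\lambda,\mu}$ with $c_{\mu,\Omega}$ via zero-extension of functions in $H_0^1(\Omega)$ (using $V\equiv 0$ on $\overline{\Omega}$) together with the strict inequality $c_{\mu,\Omega}<m(\mathbb{R}^N)$ borrowed from \cite{LYY}, and the lower bound comes from a $\lambda$-uniform smallness estimate based on Lemma \ref{lem2} and the Sobolev/Hardy--Littlewood--Sobolev inequalities. The only cosmetic differences are that you bound $c_{\lambda,\mu}$ from below by the mountain-pass sphere value while the paper works on the Nehari manifold (equivalent by \eqref{Equal}), and that your use of the characterization $c_{\lambda,\mu}^s\leqslant c_{\mu,\Omega}^s$ avoids invoking attainment of $c_{\mu,\Omega}$, which the paper does use.
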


\begin{proof} It is proved in \cite{LYY} that $c_{\mu,\Omega}<m(\mathbb{R}^N)$
and that there exists a critical point $u\in H_0^1(\Omega)$ of $I_{\mu,\Omega}$ such that  $I_{\mu,\Omega}(u)=c_{\mu,\Omega}$.  Extend the function $u$ to $\mathbb{R}^N$ such that $u=0$ outside $\Omega$,  then $u\in \mathcal{N}_{\lambda,\mu}$, which implies
$$c_{\lambda,\mu}\leqslant I_{\lambda,\mu}(u)=I_{\mu,\Omega}(u)=c_{\mu,\Omega}.$$
So there exists $\tau=\tau(\mu)>0$ such that $$c_{\lambda,\mu}\leqslant c_{\mu,\Omega}<m(\mathbb{R}^N)-\tau$$
for all $\lambda>0$.

Next, for each $u\in \mathcal{N}_{\lambda,\mu}$,
\begin{align*}
    0=\langle I'_{\lambda,\mu}(u),u \rangle&=(L_\lambda u,u)-\mu|u|^p_p-\sum\limits_{i=1}^k\int_{\mathbb{R}^N}(|x|^{-(N-\alpha_i)}*|u|^{2^*_i})|u|^{2^*_i}\,dx\\
    &\geqslant \|u\|_\lambda^2-C\|u\|_\lambda^p-C\sum\limits_{i=1}^k\|u\|_\lambda^{22^*_i},
\end{align*}
where $C$  depending only on $\mu$, yields that there exists $\sigma>0$ independent of $\lambda$ such that $\|u\|_\lambda\geqslant\sigma$,  and then
$$C\sigma\leqslant C\|u\|_\lambda^2\leqslant(L_\lambda u,u)=\mu|u|^p_p+\sum\limits_{i=1}^k\int_{\mathbb{R}^N}(|x|^{-(N-\alpha_i)}*|u|^{2^*_i})|u|^{2^*_i}\,dx.$$
Hence,
\begin{align*}
    c_{\lambda,\mu}&=m_{\lambda,\mu}=\inf\limits_{u\in \mathcal{N}_{\lambda,\mu}}I_{\lambda,\mu}(u)\\
    &\geqslant \inf\limits_{u\in \mathcal{N}_{\lambda,\mu}}(\frac{1}{2}-\frac{1}{p})\left[\mu|u|^p_p+\sum\limits_{i=1}^k\int_{\mathbb{R}^N}(|x|^{-(N-\alpha_i)}*|u|^{2^*_i})|u|^{2^*_i}\,dx\right]\\
    &\geqslant (\frac{1}{2}-\frac{1}{p})C\sigma,
\end{align*}
the conclusion follows.
\end{proof}

\bigskip

 Now we show  that $I_{\lambda,\mu}$ satisfies the $(PS)_c$ condition for certain $c$.
\begin{proposition}
\label{prop1}
  There exist $\mu\in (0,\mu_1)$ and $\lambda(\mu)>0$ such that $I_{\lambda,\mu}$ satisfies the $(PS)_c$ condition for $$c<m(\mathbb{R}^N)-\tau$$ whenever $\lambda\geqslant\lambda(\mu)$.
\end{proposition}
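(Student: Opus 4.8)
The plan is to start from an arbitrary $(PS)_c$ sequence $\{u_n\}$, which is bounded in $E$ by Lemma \ref{lem3}, and extract a subsequence with $u_n\rightharpoonup u$ weakly in $E$. By Lemma \ref{lem1} we then have $u\in H^1_0(\Omega)$, $u_n\to u$ in $L^2(\mathbb{R}^N)$, and, writing $v_n:=u_n-u$, also $v_n\to 0$ in $L^p(\mathbb{R}^N)$ for $2\le p<\frac{2N}{N-2}$. First I would check that $u$ is a critical point of $I_{\lambda,\mu}$: the subcritical term passes to the limit by the $L^p$-convergence, while the nonlocal Choquard terms pass to the limit by the weak continuity of the Riesz-potential nonlinearity on bounded sets. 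Testing $I'_{\lambda,\mu}(u)=0$ with $u$ and using $2<p<2\cdot 2^*_i$ then gives
\begin{equation*}
I_{\lambda,\mu}(u)=I_{\lambda,\mu}(u)-\frac1p\langle I'_{\lambda,\mu}(u),u\rangle\geq \Big(\frac12-\frac1p\Big)(L_\lambda u,u)\geq 0,
\end{equation*}
the discarded nonlocal terms being nonnegative.

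The key step is a Brezis--Lieb type decomposition. Since $u\in H^1_0(\Omega)$ is supported where $V=0$, one has $\int V u^2=\int Vv_n u=0$, so $(L_\lambda u_n,u_n)=(L_\lambda u,u)+(L_\lambda v_n,v_n)+o(1)$ by the Brezis--Lieb lemma for the gradient. Writing $D_i(w):=\int_{\mathbb{R}^N}\int_{\mathbb{R}^N}|w(x)|^{2^*_i}|w(y)|^{2^*_i}|x-y|^{-(N-\alpha_i)}\,dx\,dy$, the nonlocal Brezis--Lieb lemma gives $D_i(u_n)=D_i(u)+D_i(v_n)+o(1)$, while $\int|u_n|^p=\int|u|^p+o(1)$ by the $L^p$-convergence of $v_n$. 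Combining these, and abbreviating $\tilde J_\lambda(v_n):=\frac12(L_\lambda v_n,v_n)-\sum_{i=1}^k\frac{1}{2\cdot 2^*_i}D_i(v_n)$, I obtain
\begin{equation*}
I_{\lambda,\mu}(u_n)=I_{\lambda,\mu}(u)+\tilde J_\lambda(v_n)+o(1),\qquad (L_\lambda v_n,v_n)=\sum_{i=1}^kD_i(v_n)+o(1),
\end{equation*}
where the second identity comes from subtracting $\langle I'_{\lambda,\mu}(u),u\rangle=0$ from $\langle I'_{\lambda,\mu}(u_n),u_n\rangle=o(1)$. Substituting the second relation into the definition of $\tilde J_\lambda(v_n)$ yields $\tilde J_\lambda(v_n)=\sum_{i=1}^k\big(\frac12-\frac{1}{2\cdot 2^*_i}\big)D_i(v_n)+o(1)\to d:=c-I_{\lambda,\mu}(u)$.

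Next I would run a dichotomy on $\ell:=\lim (L_\lambda v_n,v_n)=\lim\sum_{i=1}^k D_i(v_n)$. If $\ell=0$, then Lemma \ref{lem2} gives $C\|v_n\|_\lambda^2\le (L_\lambda v_n,v_n)\to 0$, hence $u_n\to u$ strongly in $E$ and we are done. If $\ell>0$, I claim $d\ge m(\mathbb{R}^N)$, which yields the contradiction $c=I_{\lambda,\mu}(u)+d\ge d\ge m(\mathbb{R}^N)$ against $c<m(\mathbb{R}^N)-\tau$. To prove the claim, the Hardy--Littlewood--Sobolev inequality gives $D_i(v_n)\le C_i(\int|\nabla v_n|^2)^{2^*_i}\le C_i(L_\lambda v_n,v_n)^{2^*_i}$, so $\ell\le\sum_{i=1}^k C_i\ell^{2^*_i}$ forces $\ell$, and hence $\int|\nabla v_n|^2$, to be bounded away from $0$. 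I then project $v_n$ onto the Nehari manifold $\mathcal{N}_{\mathbb{R}^N}$ of the limit functional $J$: there is a unique $t_n>0$ with $t_nv_n\in\mathcal{N}_{\mathbb{R}^N}$, and since $\int|\nabla v_n|^2\le (L_\lambda v_n,v_n)=\sum_{i=1}^k D_i(v_n)+o(1)$ gives $\langle J'(v_n),v_n\rangle\le o(1)$, a monotonicity argument on $t\mapsto\langle J'(tv_n),tv_n\rangle$ shows $t_n\le 1+o(1)$. Consequently
\begin{equation*}
m(\mathbb{R}^N)\le J(t_nv_n)=\sum_{i=1}^k\Big(\frac12-\frac{1}{2\cdot 2^*_i}\Big)t_n^{2\cdot2^*_i}D_i(v_n)\le \tilde J_\lambda(v_n)+o(1),
\end{equation*}
and letting $n\to\infty$ gives $d\ge m(\mathbb{R}^N)$, as claimed.

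The main obstacle is the energy splitting for the nonlocal critical terms together with the mixed-homogeneity Nehari projection: one must justify the nonlocal Brezis--Lieb identity for each $D_i$, and, because the exponents $2^*_i$ differ and there is no single scaling, control the projection parameter $t_n$ uniformly even when the potential part $\lambda\int V v_n^2$ does not vanish. This last point is exactly what the estimate $\langle J'(v_n),v_n\rangle\le o(1)$ and the resulting comparison $t_n\le 1+o(1)$ are designed to handle, and it is where the hypothesis $\lambda\ge\lambda(\mu)$ (through Lemma \ref{lem2}) and the level restriction $c<m(\mathbb{R}^N)-\tau$ enter decisively.
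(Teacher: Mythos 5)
Your overall strategy is the same as the paper's (Br\'ezis--Lieb splitting of a bounded $(PS)_c$ sequence, then a Nehari projection of the remainder onto $\mathcal{N}_{\mathbb{R}^N}$ to push the level up to $m(\mathbb{R}^N)$), and your single projection with the one-sided bound $t_n\leqslant 1+o(1)$ is in fact tidier than the paper's two-step projection (first onto $\mathcal{N}_{\lambda,\mu}$, then onto $\mathcal{N}_{\mathbb{R}^N}$, which forces the paper to worry about the second parameter $s_n$). But there is a genuine gap at your very first step, and it propagates through everything that follows. You invoke Lemma \ref{lem1} to conclude that $u\in H^1_0(\Omega)$, that $u_n\to u$ in $L^2(\mathbb{R}^N)$, and that $v_n:=u_n-u\to 0$ in $L^p(\mathbb{R}^N)$. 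Lemma \ref{lem1} is a statement about sequences with $\lambda_n\to\infty$; in Proposition \ref{prop1} the parameter $\lambda\geqslant\lambda(\mu)$ is \emph{fixed}, so the lemma does not apply, and its conclusions are actually false at fixed $\lambda$: the weak limit of a $(PS)_c$ sequence solves \eqref{p0} on all of $\mathbb{R}^N$ and need not lie in $H^1_0(\Omega)$ (membership in $H^1_0(\Omega)$ is precisely the $\lambda\to\infty$ concentration content of Theorem \ref{th2}), and $L^2$/$L^p$ mass of $v_n$ can escape to infinity inside the region where $V$ stays bounded. What $(V1)$--$(V2)$ give at fixed $\lambda$ is only $\limsup_n|v_n|_2^2\leqslant CK/(\lambda M_0)$: small when $\lambda$ is large, but \emph{not} $o_n(1)$. (Incidentally, the quadratic splitting $(L_\lambda u_n,u_n)=(L_\lambda u,u)+(L_\lambda v_n,v_n)+o(1)$ never needed $u\in H^1_0(\Omega)$; it follows from bilinearity and $v_n\rightharpoonup 0$.)

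The damage concentrates in your key identity $(L_\lambda v_n,v_n)=\sum_i D_i(v_n)+o(1)$ and in the splitting of the $\mu$-term: correctly stated, both carry an extra error $\mu|v_n|_p^p$ whose $\limsup$ in $n$ is only $O(\lambda^{-(1-\theta)p/2})$, not zero. Then your dichotomy on $\ell$ is incomplete: besides $\ell=0$ (strong convergence) and $\ell\geqslant\ell_0>0$ (your contradiction), there is an intermediate regime $0<\ell\lesssim\mu\,o_\lambda(1)$ in which neither branch applies --- HLS no longer forces $\ell$ away from $0$, and $\sum_i D_i(v_n)$ may degenerate, destroying the bound $t_n\leqslant1+o(1)$. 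Symptomatically, your final contradiction reads $c\geqslant m(\mathbb{R}^N)$ and never uses the margin $\tau$, whereas the proposition carries the gap $\tau$ and an adjustable threshold $\lambda(\mu)$ precisely so that $\lambda$-dependent errors can be absorbed; the paper's proof, for all its terseness, keeps these as $o_\lambda(1)$ terms and closes by ``choosing $\lambda$ large enough''. Your argument can be repaired along its own lines: (i) if $v_n\not\to0$ in $E$, combine $\langle I'_{\lambda,\mu}(v_n),v_n\rangle=o(1)$ with Lemma \ref{lem2}, Hardy--Littlewood--Sobolev and interpolation to get $\liminf_n\|v_n\|_\lambda\geqslant\sigma_0(\mu)>0$ with $\sigma_0$ independent of $\lambda$ and of the sequence, which rules out the intermediate regime once $\mu\Xi_\lambda\leqslant\tfrac12 C\sigma_0(\mu)^2$, where $\Xi_\lambda$ denotes the uniform bound on $\limsup_n|v_n|_p^p$ (uniform because the bound $K$ from Lemma \ref{lem3} is uniform over levels $c<m(\mathbb{R}^N)$, and $\Xi_\lambda\to0$ as $\lambda\to\infty$); (ii) rerun your projection keeping the error, which yields $c\geqslant m(\mathbb{R}^N)-C\mu\Xi_\lambda$; (iii) enlarge $\lambda(\mu)$ so that $C\mu\Xi_\lambda<\tau$. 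Without this book-keeping the proof, as written, does not go through.
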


\begin{proof} It is known from Lemma \ref{lem3} that  the $(PS)_c$ sequence $\{u_n\}$  of  $I_{\lambda,\mu}$  is bounded in $E$.  We may assume that
  \begin{align*}
    &u_n\rightharpoonup u_0 \quad\text{in}\quad E,\\
    &u_n\to u_0\quad\text{in}\quad L^2_{loc}(\mathbb{R}^N),\\
    &u_n(x)\to u_0(x)\quad\text{a.e. on}\quad \mathbb{R}^N
  \end{align*}
as $n\to\infty$. By the hardy-Littlewood-Sobolev inequality  and H\"older inequality,
\begin{equation}\label{equ3}
    (|x|^{-(N-\alpha_i)}\ast |u_n|^{2^*_i})|u_n|^{2^*_i-1}\rightharpoonup(|x|^{-(N-\alpha_i)}\ast |u_0|^{2^*_i})|u_0|^{2^*_i-1}\quad \text{in}\ L^{\frac{2N}{N+\alpha_i}}(\mathbb{R}^N),
\end{equation}
 for $i=1,2,\cdots,k$. Thus, for every $\varphi\in H^1(\mathbb{R}^N)$,
  \begin{equation*}
     0=\lim_{n\to \infty}\langle I'_{\lambda,\mu}(u_n),\varphi\rangle =\langle I'_{\lambda,\mu}(u_0),\varphi\rangle,
  \end{equation*}
 that is, $u_0$ is a weak solution of problem \eqref{p0}.

Let $w_n=u_n-u_0$.  The Br\'ezis-Lieb type Lemma in \cite{Gao}
  \begin{equation}\label{eq10}
  \begin{aligned}
     &\lim\limits_{n\to\infty}\left[\int_{\mathbb{R}^N}(|x|^{-(N-\alpha_i)}\ast |u_n|^{2^*_i})|u_n|^{2^*_i-1}\,dx-\int_{\mathbb{R}^N}(|x|^{-(N-\alpha_i)}\ast |w_n|^{2^*_i})|w_n|^{2^*_i-1}\,dx\right]\\
     &=\int_{\mathbb{R}^N}(|x|^{-(N-\alpha_i)}\ast |u_0|^{2^*_i})|u_0|^{2^*_i-1}\,dx
  \end{aligned}
  \end{equation}
enables us to deduce
\[
 \begin{split}
  o(1)= \langle I'_{\lambda,\mu}(u_n),u_n\rangle &=\langle I'_{\lambda,\mu}(u_0),u_0\rangle+\langle I'_{\lambda,\mu}(w_n),w_n\rangle+o(1),\\
 \end{split}
 \]
 namely,
 \begin{equation}\label{eq11}
     \langle I'_{\lambda,\mu}(w_n),w_n\rangle=o(1).
 \end{equation}
 Similarly,
\begin{equation}
    \label{eq12}
   c+o(1) = I_{\lambda,\mu}(u_n)
   =I_{\lambda,\mu}(u_0)+I_{\lambda,\mu}(w_n)\geqslant I_{\lambda,\mu}(w_n).
 \end{equation}
since $I_{\lambda,\mu}(u_0)\geqslant0.$

In order to prove that $u_n$ converges strongly to $u_0$ in $E$, we claim that for each $u\in E$ the function $f_u(t)=I_{\lambda,\mu}(tu)$  has a unique critical point $t_u$ such that
 $ f_u(t_u) =\max_{t{\color{red}\geqslant}0}f(t)$ and $t_u u\in \mathcal{N}_{\lambda,\mu}$. Indeed,
 \begin{align*}
  f_u'(t)&=t(L_\lambda u,u)-t^{p-1}\mu|u|^p_p-\sum\limits_{i=1}^kt^{22^*_i-1}\int_{\mathbb{R}^N}(|x|^{-(N-\alpha_i)}*|u|^{2^*_i})|u|^{2^*_i}\,dx\\
  &=t\left[(L_\lambda u,u)-m_u(t)\right],
\end{align*}
where the function $m_u(t):=t^{p-2}\mu|u|^p_p+\sum\limits_{i=1}^kt^{22^*_i-2}\int_{\mathbb{R}^N}(|x|^{-(N-\alpha_i)}*|u|^{2^*_i})|u|^{2^*_i}\,dx>0$ satisfies  $\lim\limits_{t\to+\infty}m_u(t)=+\infty$ and $m_u(0)=0$. The claim follows readily.

Now we show $u_n$ converge strongly to $u_0$ in $E$.  Suppose, by contradiction, that $u_n$ does not converge strongly to $u_0$, then $w_n\not\to 0$ in $E$ as $n\to\infty$. So
there exists a unique $t_n>0$ such that $t_nw_n\in\mathcal{N}_{\lambda,\mu}$ and $f_{w_n}(t)$ achieves the maximum at $t=t_n$. By \eqref{eq11} and
\begin{equation}\label{eq13}
    \int_{\mathbb{R}^N}(|\nabla w_n|^2+\lambda V(x)w_n^2)\,dx=t_n^{p-2}\mu|w_n|^p_p+\sum\limits_{i=1}^kt_n^{22^*_i-2}\int_{\mathbb{R}^N}(|x|^{-(N-\alpha_i)}*|w_n|^{2^*_i})|w_n|^{2^*_i}\,dx{\color{red},}
\end{equation}
we find
 $$(t_n^{p-2}-1)\mu|w_n|^p_p+\sum\limits_{i=1}^k(t_n^{22^*_i-2}-1)\int_{\mathbb{R}^N}(|x|^{-(N-\alpha_i)}*|w_n|^{2^*_i})|w_n|^{2^*_i}\,dx=o(1),$$
which implies $t_n\to1$ since $w_n\not\to 0$ in $E$. Therefore,
\begin{equation}
    \label{eq14}
   I_{\lambda,\mu}(w_n)=I_{\lambda,\mu}(t_nw_n)+o(1).
\end{equation}

For any $\tilde\Omega\subset \mathbb{R}^N$, we define
\begin{equation}\label{J}
J_{\tilde\Omega}(u)=\frac{1}{2}\int_{\tilde\Omega}|\nabla u|^2\, dx-\sum\limits_{i=1}^k\frac{1}{22^*_i}\int_{\tilde\Omega}(|x|^{-(N-\alpha_i)}*|u|^{2^*_i})|u|^{2^*_i}\,dx
\end{equation}
and denote $J(u)= J_{\mathbb{R}^N}(u)$. Set $v_n=t_nw_n$. We may show similarly that the function $g_u(s)= J(su)$ for each $u\in E\setminus\{0\}$ has unique critical point $s_u>0$ which is the maximum point of $g_u(s)$. Particularly, there exists a unique $s_n\in\mathbb{R}^+$  such that $g_{v_n}'(s_n)=0$ and $s_nv_n\in\mathcal{N}_{\mathbb{R}^N}$. Since $v_n\in\mathcal{N}_{\lambda,\mu}$, we get
$$I_{\lambda,\mu}(v_n)=\sup_{s\geqslant0}I_{\lambda,\mu}(sv_n).$$
By Lemma \ref{lem1}, \eqref{eq12} and \eqref{eq14},
\begin{align*}
    m(\mathbb{R}^N)-\tau&\geqslant I_{\lambda,\mu}(u_n)\geqslant I_{\lambda,\mu}(w_n)=I_{\lambda,\mu}(t_nw_n)+o(1)\geqslant I_{\lambda,\mu}(s_nv_n)+o_n(1)\\
    &\geqslant J(s_nv_n)+o_n(1)+o_\lambda(1)
    \geqslant m(\mathbb{R}^N)+o_n(1)+o_\lambda(1).
\end{align*}
This yields a contradiction by choosing $\lambda$ large enough. Hence, $I_{\lambda,\mu}$ satisfies the $(PS)_c$ condition.
\end{proof}

\bigskip

\begin{proposition}
\label{prop2}
  If $u\in\mathcal{N}_{\lambda,\mu}$ is a critical point of $I_{\lambda,\mu}$ such that $I_{\lambda,\mu}(u)<2c_{\lambda,\mu}$, then $u$ does not change sign. Hence, $|u|$ is a solution of problem \eqref{p0}
\end{proposition}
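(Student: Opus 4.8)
The statement splits into two parts. The final clause is immediate: the nonlinearity in \eqref{p0} is odd in $u$, so if a solution $u$ has constant sign then $|u|$ coincides with $u$ or $-u$ and is again a solution. Hence the whole content is to show that a critical point $u\in\mathcal{N}_{\lambda,\mu}$ with $I_{\lambda,\mu}(u)<2c_{\lambda,\mu}$ cannot change sign. I would argue by contradiction: assuming $u^{+}:=\max\{u,0\}$ and $u^{-}:=\max\{-u,0\}$ are both nonzero, the goal is to derive $I_{\lambda,\mu}(u)\geq 2c_{\lambda,\mu}$, contradicting the hypothesis.

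First I would exploit that $u^{+}$ and $u^{-}$ have disjoint supports. The quadratic form $(L_\lambda\cdot,\cdot)$ and the local term then split additively, $(L_\lambda u,u)=(L_\lambda u^{+},u^{+})+(L_\lambda u^{-},u^{-})$ and $|u|_p^p=|u^{+}|_p^p+|u^{-}|_p^p$, while each Riesz term decomposes as
\[
D_i(u)=D_i(u^{+})+D_i(u^{-})+2C_i,\qquad C_i:=\int_{\mathbb{R}^N}\big(|x|^{-(N-\alpha_i)}*(u^{-})^{2^*_i}\big)(u^{+})^{2^*_i}\,dx>0,
\]
where $D_i(v):=\int_{\mathbb{R}^N}(|x|^{-(N-\alpha_i)}*|v|^{2^*_i})|v|^{2^*_i}\,dx$. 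Since $u$ is a critical point, testing $I_{\lambda,\mu}'(u)$ against $u^{+}$ and $u^{-}$ gives $\langle I_{\lambda,\mu}'(u),u^{\pm}\rangle=0$; because the cross terms $C_i$ are strictly positive, this yields $\langle I_{\lambda,\mu}'(u^{\pm}),u^{\pm}\rangle=\sum_i C_i>0$. Thus $u^{\pm}$ do not lie on $\mathcal{N}_{\lambda,\mu}$, but by the fibering analysis of the map $f_v(t)=I_{\lambda,\mu}(tv)$ carried out in the proof of Proposition \ref{prop1} there exist unique $s_{\pm}>0$ with $s_{\pm}u^{\pm}\in\mathcal{N}_{\lambda,\mu}$, so that $I_{\lambda,\mu}(s_{\pm}u^{\pm})\geq m_{\lambda,\mu}=c_{\lambda,\mu}$.

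The remaining step is to combine these two contributions. I would use that, by the same fibering argument applied to the two-parameter map $(s,t)\mapsto I_{\lambda,\mu}(su^{+}+tu^{-})$, the point $(1,1)$ maximizes this map over $[0,\infty)^2$, whence $I_{\lambda,\mu}(u)\geq I_{\lambda,\mu}(s_{+}u^{+}+s_{-}u^{-})$; expanding the right-hand side and estimating the cross-interaction through its positivity (and the Cauchy--Schwarz bound $C_i\leq (D_i(u^{+})D_i(u^{-}))^{1/2}$ for the Riesz bilinear form), one aims at $I_{\lambda,\mu}(u)\geq I_{\lambda,\mu}(s_{+}u^{+})+I_{\lambda,\mu}(s_{-}u^{-})\geq 2c_{\lambda,\mu}$, the desired contradiction. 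Once $u$ is known to be of one sign, the odd symmetry of the equation finishes the proof.

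The crux, and the step I expect to be the main obstacle, is precisely this last combination. In the purely local case the positive and negative parts automatically satisfy $\langle I'(u^{\pm}),u^{\pm}\rangle=0$, so $u^{\pm}\in\mathcal{N}$ and $I(u)=I(u^{+})+I(u^{-})\geq 2c$ with no further work. Here the nonlocal interaction destroys this structure: the strict positivity of $C_i$ forces $s_{\pm}>1$ and makes $I_{\lambda,\mu}(u)<I_{\lambda,\mu}(u^{+})+I_{\lambda,\mu}(u^{-})$, so the naive splitting runs in the \emph{wrong} direction. The delicate point is therefore to balance the energy lost to the cross-interaction against the energy gained in projecting $u^{\pm}$ onto $\mathcal{N}_{\lambda,\mu}$; this is where the positivity and Cauchy--Schwarz structure of the Riesz kernels, together with the maximality at $(1,1)$, must be used quantitatively, and it is the only genuinely nontrivial ingredient in the argument.
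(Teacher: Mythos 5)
Your computations are all correct, and your own diagnosis at the end is accurate: what you have is not a proof, and the missing step cannot be filled in the way you hope. Even granting that $(1,1)$ is the global maximum of $(s,t)\mapsto I_{\lambda,\mu}(su^++tu^-)$ on the quadrant (a nontrivial claim, though provable here because every $2^*_i>2$), the disjoint-support expansion gives
\[
I_{\lambda,\mu}(u)\;\geqslant\; I_{\lambda,\mu}(s_+u^++s_-u^-)\;=\;I_{\lambda,\mu}(s_+u^+)+I_{\lambda,\mu}(s_-u^-)-\sum_{i=1}^k\frac{s_+^{2^*_i}s_-^{2^*_i}}{2^*_i}\,C_i,
\]
so the most this route can yield is $I_{\lambda,\mu}(u)\geqslant 2c_{\lambda,\mu}-\sum_{i=1}^k s_+^{2^*_i}s_-^{2^*_i}C_i/2^*_i$: the cross terms enter with a minus sign, and any upper bound on $C_i$ (Cauchy--Schwarz included) only weakens the estimate rather than repairing it. There is no way to reach $I_{\lambda,\mu}(u)\geqslant I_{\lambda,\mu}(s_+u^+)+I_{\lambda,\mu}(s_-u^-)$ from here.

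You should know, however, that the paper does not bridge this gap either: its entire proof consists of asserting that $u^{\pm}\in\mathcal{N}_{\lambda,\mu}$ and that $I_{\lambda,\mu}(u)=I_{\lambda,\mu}(u^+)+I_{\lambda,\mu}(u^-)\geqslant 2c_{\lambda,\mu}$ --- precisely the local-case splitting you rightly reject. Both assertions fail as soon as $u^+\neq0\neq u^-$: as you computed, $\langle I'_{\lambda,\mu}(u^{\pm}),u^{\pm}\rangle=\sum_{i=1}^kC_i>0$, and $I_{\lambda,\mu}(u)=I_{\lambda,\mu}(u^+)+I_{\lambda,\mu}(u^-)-\sum_{i=1}^kC_i/2^*_i$, with strict inequality in place of the claimed equality. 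Moreover, the obstruction you isolated is substantive, not formal: for Choquard equations, Ghimenti and Van Schaftingen (\emph{Nodal solutions for the Choquard equation}, J. Funct. Anal. 271 (2016) 107--135) proved that the attractive nonlocal cross-interaction pushes the least-action nodal level strictly \emph{below} twice the ground-state level, in contrast with local nonlinearities. Hence no argument relying solely on the threshold $2c_{\lambda,\mu}$ can exclude sign-changing critical points, and the proposition as stated is in serious doubt. If the purpose is the positivity of the solutions in Theorem \ref{th3}, the standard repair is different: replace $|u|^{p}$ and $|u|^{2^*_i}$ by $(u^+)^{p}$ and $(u^+)^{2^*_i}$ throughout the functional, run the same variational scheme, and recover $u\geqslant0$ by testing the Euler--Lagrange equation with $u^-$; the strong maximum principle then gives $u>0$. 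In short: your attempt is incomplete, but your analysis of why it is incomplete is sharper than the paper's own treatment of this proposition.
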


\begin{proof}
  Let $u^{\pm}=\pm\max\{\pm u,0\}$, then $u=u^++u^-$. If neither $u^+$ nor $u^-$ is zero, we have $u^{\pm}\in\mathcal{M}_{\lambda,\mu}$ and
  $$I_{\lambda,\mu}(u)=I_{\lambda,\mu}(u^+)+I_{\lambda,\mu}(u^-)\geqslant2c_{\lambda,\mu},$$
  which is absurd. The proof is completed by the maximum principle.
\end{proof}

\bigskip

 Now, we are ready to prove Theorem \ref{th1}.

\emph{\bf Proof of Theorem \ref{th1}. }
   Let $\{u_n\}$ be a minimizing sequence of $I_{\lambda,\mu}$ restricting on $\mathcal{N}_{\lambda,\mu}$. By the Ekeland variational principle, we may assume that $\{u_n\}$ is a $(PS)_{c_{\lambda,\mu}}$ sequence, that is,
 $$I_{\lambda,\mu}(u_n)\to c_{\lambda,\mu}\quad\text{  and }\quad I'_{\lambda,\mu}(u_n)\to0.$$
 It follows from Proposition \ref{prop1} and Lemma \ref{lem6} that $\{u_n\}$  has a subsequence converging to a least energy solution $u_\lambda$ of \eqref{p0}. $\Box$

\bigskip

By Theorem \ref{th1}, for all $\lambda_n\geqslant\lambda(\mu)$, The problem
\begin{equation}\label{eq15}
    -\Delta u+\lambda_n V(x)u=\mu |u|^{p-2}u+\sum\limits_{i=1}^{k}(|x|^{-(N-\alpha_i)}* |u|^{2^*_i})|u|^{2^*_i-2}u\quad \text{in}\  \mathbb{R}^N
\end{equation}
has at least one ground state solution $u_{\lambda_n}$.
\bigskip

\emph{\bf Proof of Theorem \ref{th2}. }
Let $u_{\lambda_n}$ be a ground state solution to \eqref{eq15} for every $\lambda_n\geqslant\lambda(\mu)$.  Then we have $I_{\lambda_n,\mu}(u_{\lambda_n})=c_{\lambda_n,\mu}$ and $I'_{\lambda_n,\mu}(u_{\lambda_n})=0$. By Lemma \ref{lem6}, $c_{\lambda_n,\mu}$ is uniformly bounded.  We may show as Lemma \ref{lem3} that there is $C>0$ such that $\|u_{\lambda_n}\|^2_{\lambda_n}\leqslant C$. By Lemma \ref{lem1}, there exists $u\in H^1_0(\Omega)$ such that, up to a subsequence, $u_{\lambda_n}\rightharpoonup u$ weakly in $E$, $u_{\lambda_n}\to u$ in $L^2(\mathbb{R}^N)$ and  $u_{\lambda_n}\to u$ in $L^p(\mathbb{R}^N)$.

For each $\varphi\in H^1_0(\Omega)$, we extend $\varphi$ to $\mathbb{R}^N$ by setting $\varphi = 0$ outside $\Omega$. Then we have
\begin{equation}\label{eq16}
    0=\lim_{n\to\infty}\langle I'_{\lambda_n,\mu}(u_{\lambda_n}),\varphi\rangle=\langle I'_{\mu,\Omega}(u),\varphi\rangle.
    \end{equation}
This means that $u$ is a weak solution of problem \eqref{p1}.

Now we show $u_{\lambda_n}$ converges to $u$ in $E$. Suppose  on the contrary that $v_n:=u_{\lambda_n}-u{\color{red}\not\to}0$ in $E$ as $n\to\infty$.
By \eqref{eq10} and \eqref{eq16},
\[
   \lim_{n\to\infty} \langle I'_{\lambda_n,\mu}(v_n),v_n\rangle
    =\lim_{n\to\infty} \langle I'_{\lambda_n,\mu}(u_{\lambda_n}),u_{\lambda_n}\rangle-\langle I'_{\mu,\Omega}(u),u\rangle=0.
\]
Arguing as the proof of Proposition \ref{prop1}, there is unique $t_n > 0$ such that $t_nv_n\in\mathcal{N}_{\lambda_n,\mu}$ and $t_n\to1$ as $n\to\infty$. Hence,
$$I_{\lambda_n,\mu}(v_n)+o(1) =I_{\lambda_n,\mu}(t_nv_n) =\sup_{t\geqslant0}I_{\lambda_n,\mu}(tv_n).$$
On the other hand, there exists $s_n>0$ such that $s_nt_nv_n\in\mathcal{N}_{\mathbb{R}^N}$. We derive from Proposition \ref{prop1} that
\begin{align*}
m(\mathbb{R}^N)-\tau&>   c_{\lambda_n,\mu}+o(1)=I_{\lambda_n,\mu}(u_{\lambda_n})\geqslant I_{\lambda_n,\mu}(v_n)= I_{\lambda_n,\mu}(t_nv_n)+o(1)\\
    &\geqslant I_{\lambda_n,\mu}(s_nt_nv_n)+o(1)\geqslant J(s_nt_nv_n)+o(1)\geqslant m(\mathbb{R}^N)+o(1)
\end{align*}
a contradiction. The assertion follows. $\Box$

\bigskip

\begin{corollary}\label{coro2}
  If $\mu\in (0,\mu_1)$, then $\lim\limits_{\lambda_n\to\infty}c_{\lambda_n,\mu}=c_{\mu,\Omega}$.
\end{corollary}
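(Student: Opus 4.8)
The plan is to establish the equality by squeezing $c_{\mu,\Omega}$ between the $\limsup$ and the $\liminf$ of $c_{\lambda_n,\mu}$ as $\lambda_n\to\infty$. The upper bound is essentially already contained in the proof of Lemma \ref{lem6}: extending a minimizer $u\in H^1_0(\Omega)$ of $I_{\mu,\Omega}$ by zero places it in $\mathcal{N}_{\lambda,\mu}$ with $I_{\lambda,\mu}(u)=I_{\mu,\Omega}(u)=c_{\mu,\Omega}$, so that $c_{\lambda,\mu}\leqslant c_{\mu,\Omega}$ for every $\lambda>0$. Passing to the limit along $\lambda_n$ gives $\limsup_{\lambda_n\to\infty}c_{\lambda_n,\mu}\leqslant c_{\mu,\Omega}$. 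The real work is the matching lower bound $\liminf_{\lambda_n\to\infty}c_{\lambda_n,\mu}\geqslant c_{\mu,\Omega}$.

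For the lower bound I would use Theorem \ref{th1} to choose, for each $\lambda_n\geqslant\lambda(\mu)$, a ground state $u_{\lambda_n}$ with $I_{\lambda_n,\mu}(u_{\lambda_n})=c_{\lambda_n,\mu}$ and $I'_{\lambda_n,\mu}(u_{\lambda_n})=0$. By Lemma \ref{lem6} the values satisfy $\sigma(\mu)\leqslant c_{\lambda_n,\mu}<m(\mathbb{R}^N)-\tau$, hence are bounded; passing to a subsequence I may assume $c_{\lambda_n,\mu}\to c$ with $0<\sigma(\mu)\leqslant c\leqslant c_{\mu,\Omega}<m(\mathbb{R}^N)$. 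Since $I_{\lambda_n,\mu}(u_{\lambda_n})=c_{\lambda_n,\mu}\to c<m(\mathbb{R}^N)$, Theorem \ref{th2} applies and yields $u_{\lambda_n}\to u$ strongly in $E$, where $u\in H^1_0(\Omega)$ solves the limit problem \eqref{p1}.

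The crux is to identify $c$ with $I_{\mu,\Omega}(u)$. Because $u\in H^1_0(\Omega)$ and $V\equiv0$ on $\Omega$, the potential term drops in the limit functional; the delicate point is that the rescaled potential energy $\lambda_n\int_{\mathbb{R}^N}V u_{\lambda_n}^2\,dx$ vanishes in the limit. I would extract this by comparing the two Nehari identities: $\langle I'_{\lambda_n,\mu}(u_{\lambda_n}),u_{\lambda_n}\rangle=0$ gives $\int|\nabla u_{\lambda_n}|^2+\lambda_n\int V u_{\lambda_n}^2=\mu|u_{\lambda_n}|^p_p+\sum_{i=1}^k\int_{\mathbb{R}^N}(|x|^{-(N-\alpha_i)}*|u_{\lambda_n}|^{2^*_i})|u_{\lambda_n}|^{2^*_i}\,dx$, while $\langle I'_{\mu,\Omega}(u),u\rangle=0$ gives the same identity for $u$ without the potential term. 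The strong convergence $u_{\lambda_n}\to u$ in $E$ forces $\int|\nabla u_{\lambda_n}|^2\to\int|\nabla u|^2$ and $|u_{\lambda_n}|^p_p\to|u|^p_p$, and, via the Hardy-Littlewood-Sobolev estimates used in Proposition \ref{prop1}, convergence of each nonlocal term; subtracting the two identities then yields $\lambda_n\int V u_{\lambda_n}^2\to0$. Consequently $c=\lim I_{\lambda_n,\mu}(u_{\lambda_n})=I_{\mu,\Omega}(u)$.

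It remains to rule out $u=0$ and conclude. If $u$ were zero, the energy convergence would force $c=I_{\mu,\Omega}(0)=0$, contradicting $c\geqslant\sigma(\mu)>0$; hence $u\neq0$, so $u\in\mathcal{N}_{\mu,\Omega}$ and therefore $c=I_{\mu,\Omega}(u)\geqslant m_{\mu,\Omega}=c_{\mu,\Omega}$. Combined with the upper bound this gives $c=c_{\mu,\Omega}$. Since every subsequence of the bounded sequence $\{c_{\lambda_n,\mu}\}$ has a further subsequence converging to the same value $c_{\mu,\Omega}$, the full sequence converges, proving $\lim_{\lambda_n\to\infty}c_{\lambda_n,\mu}=c_{\mu,\Omega}$. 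The main obstacle is the third step—showing the potential energy $\lambda_n\int V u_{\lambda_n}^2$ dissipates so that the energies genuinely converge to the limiting functional value, together with securing nontriviality of $u$; both hinge on combining the strong $E$-convergence from Theorem \ref{th2} with the quantitative lower bound $\sigma(\mu)$ from Lemma \ref{lem6}.
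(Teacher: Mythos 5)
Your proof is correct and follows essentially the same route as the paper's own (very terse) proof: the upper bound $c_{\lambda_n,\mu}\leqslant c_{\mu,\Omega}$ via zero-extension of a minimizer of $I_{\mu,\Omega}$, and the lower bound by taking ground states from Theorem \ref{th1}, invoking the concentration result of Theorem \ref{th2}, and concluding with $I_{\mu,\Omega}(u)\geqslant m_{\mu,\Omega}=c_{\mu,\Omega}$. The additional details you supply --- deducing $\lambda_n\int_{\mathbb{R}^N}Vu_{\lambda_n}^2\,dx\to 0$ by comparing the two Nehari identities, and ruling out $u=0$ via the lower bound $\sigma(\mu)$ from Lemma \ref{lem6} --- are precisely the points the paper leaves implicit, and your treatment of them is sound.
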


\begin{proof} Obviously, $c_{\lambda_n,\mu}\leqslant c_{\mu,\Omega}.$ On the other hand,
by Theorems \ref{th1} and \ref{th2}, there exists $\{u_n\}$ such that $c_{\lambda_n,\mu} = I_{\lambda_n,\mu}(u_n)$
and  $\lim\limits_{\lambda_n\to\infty}I_{\lambda_n,\mu}(u_n)=I_{\mu,\Omega}(u)\geqslant m_{\mu,\Omega} = c_{\mu,\Omega}$.
\end{proof}

\bigskip

\section{Multiple solutions}

\bigskip

\ \ In this section, we show that problem \eqref{p0} has multiple solutions in connection with the domain $V^{-1}(0)=\Omega$ by Lusternik-Schnirelman theory. Such a problem is actually related to the limit problem \eqref{p1}. We know from \cite{LYY} that \eqref{p1} possesses at least $cat_\Omega(\Omega)$ solutions, and eventually so does problem \eqref{p0} as we will show.

Let us fix $r>0$ small enough so that
$$\Omega^+_{2r}:=\{x\in\mathbb{R}^N: dist(x,\Omega)<2r\}$$
and
$$\Omega^-_{r}:=\{x\in\mathbb{R}^N: dist(x,\partial\Omega)>r\}$$
are homotopically equivalent to $\Omega$. Denote $B_r(0):=\{x\in\mathbb{R}^N:|x|<r\}\subset\Omega$ and define $c_{\mu,r}:=c_{\mu,B_r(0)}$. Therefore, we may verify as  Lemma \ref{lem6} that for $\mu\in(0,\mu_1)$,
\begin{equation*}
  c_{\mu,\Omega}<c_{\mu,r}<m(\mathbb{R}^N)-\tau.
\end{equation*}

Let $\eta\in\mathcal{C}_c^\infty(\mathbb{R}^N)$ be such that $\eta(x) = x$ for all $x\in\Omega$. We introduce the barycenter of a function $0\neq u\in H^1(\mathbb{R}^N)$ as
$$\beta(u):=\sum\limits_{i=1}^k\int_{\mathbb{R}^N}\int_{\mathbb{R}^N}\frac{\eta(x)|u(x)|^{2^*_i}|u(y)|^{2^*_i}}{|x-y|^{N-\alpha_i}}\,dxdy
\Big/\int_{\mathbb{R}^N}\int_{\mathbb{R}^N}\frac{|u(x)|^{2^*_i}|u(y)|^{2^*_i}}{|x-y|^{N-\alpha_i}}\,dxdy.$$
Denote $I_{\mu,\Omega}^{c}=\{u\in \mathcal{N}_{\mu,\Omega}: I_{\mu,\Omega}\leqslant c\}$ the level set. It is proved in \cite{LYY} the following result.
\begin{lemma}\label{lem7}
  There exists $\mu^*\in(0,\mu_1)$ such that if $0<\mu<\mu^*$, then
  $$\beta(u)\in\Omega^+_r\quad\text{for every }\ u\in \cap I_{\mu,\Omega}^{c_{\mu,r}}.$$
\end{lemma}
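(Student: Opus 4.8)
The goal is to show that if $\mu$ is small enough and $u$ lies in the sublevel set $\cap I_{\mu,\Omega}^{c_{\mu,r}}$ (that is, $u \in \mathcal{N}_{\mu,\Omega}$ with $I_{\mu,\Omega}(u) \leqslant c_{\mu,r}$), then its barycenter $\beta(u)$ falls inside the enlarged neighborhood $\Omega^+_r$. This is the standard ``barycenter lands near $\Omega$'' step of a Benci--Cerami/Lusternik--Schnirelman argument, and the natural route is a contradiction argument via concentration.

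The plan is to argue by contradiction. Suppose the conclusion fails for every $\mu^* > 0$; then there exist sequences $\mu_n \downarrow 0$ and $u_n \in \cap I_{\mu_n,\Omega}^{c_{\mu_n,r}}$ with $\beta(u_n) \notin \Omega^+_r$. First I would extract quantitative energy information: since $u_n \in \mathcal{N}_{\mu_n,\Omega}$ and $I_{\mu_n,\Omega}(u_n) \leqslant c_{\mu_n,r}$, and since as $\mu \to 0$ the $p$-term becomes negligible so that $c_{\mu,\Omega} \to m(\mathbb{R}^N)$ and $c_{\mu,r} \to m(\mathbb{R}^N)$ (this limiting behavior of the mountain pass levels is what drives the whole scheme), I would obtain that $I_{\mu_n,\Omega}(u_n) \to m(\mathbb{R}^N)$ and, using the Nehari constraint, that the pure critical energy $J(u_n) \to m(\mathbb{R}^N)$ as well. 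Thus $\{u_n\}$ is a minimizing sequence for the limiting functional $J$ on $\mathcal{N}_{\mathbb{R}^N}$, with support in $\Omega$.

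Next I would invoke the characterization, recalled in the excerpt from \cite{LYY}, that $m(\mathbb{R}^N)$ is uniquely achieved (up to translation and dilation) by the Aubin--Talenti-type function $U$, together with a concentration-compactness / profile-decomposition analysis for minimizing sequences of the multi-critical functional $J$. The outcome I expect is that $\{u_n\}$ must concentrate at a single point: there exist $y_n \in \overline{\Omega}$ and $\varepsilon_n \to 0$ such that, after the usual rescaling, $u_n$ resembles a bubble $U_{\varepsilon_n, y_n}$ concentrating at $y_n$, and the concentration point satisfies $y_n \to y_0 \in \overline{\Omega}$. The key consequence is that the mass of the measures $\mu_i^n := (|x|^{-(N-\alpha_i)} * |u_n|^{2^*_i}) |u_n|^{2^*_i}\,dx$ concentrates near $y_0$, so that the weighted average defining $\beta(u_n)$ converges to $\eta(y_0) = y_0 \in \overline{\Omega}$ (using $\eta(x)=x$ on $\Omega$ and continuity of $\eta$). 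Hence $\beta(u_n) \to y_0 \in \overline{\Omega} \subset \Omega^+_r$, contradicting $\beta(u_n) \notin \Omega^+_r$ for large $n$, and the proof closes.

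The main obstacle is the concentration step: ruling out that the minimizing sequence splits into two or more bubbles or loses mass at infinity. Because $m(\mathbb{R}^N)$ is a \emph{strict} infimum uniquely attained by a single bubble, any nontrivial splitting would force the energy strictly above $m(\mathbb{R}^N)$ (a Brezis--Lieb decomposition of $J$ along the lines of \eqref{eq10} shows each nontrivial piece contributes at least $m(\mathbb{R}^N)$ of energy), contradicting $J(u_n) \to m(\mathbb{R}^N)$; this pins the sequence to a single concentration profile. Making this rigorous in the multi-critical setting requires a careful Brezis--Lieb lemma for the simultaneous convolution terms and uniform control that the concentration point stays in $\overline{\Omega}$ — this is precisely the delicate part, but it is exactly the analysis already carried out in \cite{LYY}, so I would cite that profile/concentration result and deduce the barycenter estimate as above.
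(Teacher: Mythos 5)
The paper offers no proof of this lemma at all: it is imported verbatim from \cite{LYY} (the text preceding it reads ``It is proved in \cite{LYY} the following result''), so there is no internal argument to compare yours against. Your sketch is the standard proof of such barycenter lemmas and fits the paper's toolbox without circularity --- the energy pinching $I_{\mu_n,\Omega}(u_n)\to m(\mathbb{R}^N)$ follows by squeezing between $c_{\mu_n,\Omega}$ and $c_{\mu_n,r}$ using Lemma \ref{lem8a} and Lemma \ref{lem8} (neither of which depends on Lemma \ref{lem7}), single-bubble concentration at some $y_0\in\overline{\Omega}$ is forced because $m(\Omega)=m(\mathbb{R}^N)$ is never attained on a proper subdomain and escape to infinity is impossible for functions supported in the bounded set $\Omega$, and then $\beta(u_n)\to\eta(y_0)=y_0\in\overline{\Omega}\subset\Omega^+_r$ gives the contradiction. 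Since you, like the paper, ultimately defer the delicate multi-critical concentration and uniqueness-of-bubble analysis to \cite{LYY}, your route is in substance the same as the paper's, only with the mechanism made explicit; the only loose ends are routine ones (boundedness of $\{u_n\}$, the projection $t_nu_n\in\mathcal{N}_{\Omega}$ with $t_n\to1$ so that the pure functional $J_\Omega$ is evaluated on its Nehari manifold, and the invariance of $\beta$ under this rescaling), all of which are standard.
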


\bigskip
For any domain $\tilde\Omega\subset \mathbb{R}^N$, we define
$$m(\tilde\Omega):=\inf\limits_{u\in\mathcal{N}_{\tilde\Omega}} J_{\tilde\Omega}(u),$$
where
 $$\mathcal{N}_{\tilde\Omega}:=\left\{u\in D_0^{1,2}(\tilde\Omega)\backslash\{0\}:\langle J_{\tilde\Omega}'(u), u\rangle=0\right\}.$$
\begin{lemma}\label{lem8a} There holds
\[
 m(\tilde\Omega)= m(\mathbb{R}^N),
 \]
and $m(\tilde\Omega)$ is never achieved  unless $\tilde\Omega= \mathbb{R}^N$.
\end{lemma}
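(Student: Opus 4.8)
The plan is to establish the two inequalities $m(\tilde\Omega)\ge m(\mathbb{R}^N)$ and $m(\tilde\Omega)\le m(\mathbb{R}^N)$ separately, and then to rule out achievement for proper subdomains by invoking the classification of whole-space minimizers recalled after \eqref{N2}. The inequality $m(\tilde\Omega)\ge m(\mathbb{R}^N)$ is immediate by extension by zero: given $u\in\mathcal{N}_{\tilde\Omega}$, let $\tilde u\in D^{1,2}(\mathbb{R}^N)$ be its extension by $0$ outside $\tilde\Omega$. Since $u$ vanishes off $\tilde\Omega$, the Dirichlet integral and each nonlocal term are unchanged, so $\langle J'(\tilde u),\tilde u\rangle=\langle J'_{\tilde\Omega}(u),u\rangle=0$ and $J(\tilde u)=J_{\tilde\Omega}(u)$. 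Thus $\tilde u\in\mathcal{N}_{\mathbb{R}^N}$ and $J_{\tilde\Omega}(u)\ge m(\mathbb{R}^N)$; taking the infimum over $\mathcal{N}_{\tilde\Omega}$ yields the bound.

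For $m(\tilde\Omega)\le m(\mathbb{R}^N)$ I would run a concentration argument using the explicit extremal $U$. As $\tilde\Omega$ is open and nonempty, fix $x_0\in\tilde\Omega$ and $\rho>0$ with $B_{2\rho}(x_0)\subset\tilde\Omega$, together with $\chi\in\mathcal{C}_c^\infty(B_{2\rho}(x_0))$ satisfying $\chi\equiv1$ on $B_\rho(x_0)$. For the critical scaling $U_\varepsilon(x):=\varepsilon^{-(N-2)/2}U((x-x_0)/\varepsilon)$, which leaves both terms of $J$ invariant precisely because each $2^*_i$ is the critical Hardy--Littlewood--Sobolev exponent, set $v_\varepsilon:=\chi U_\varepsilon\in D_0^{1,2}(\tilde\Omega)$. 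A standard truncation estimate gives $\|v_\varepsilon-U_\varepsilon\|_{D^{1,2}}\to0$ as $\varepsilon\to0$, and with the Hardy--Littlewood--Sobolev inequality this yields $J(v_\varepsilon)\to J(U)=m(\mathbb{R}^N)$ and $\langle J'(v_\varepsilon),v_\varepsilon\rangle\to0$. Applying the Nehari projection as in Proposition \ref{prop1}, now for the functional $J_{\tilde\Omega}$, produces $t_\varepsilon>0$ with $t_\varepsilon v_\varepsilon\in\mathcal{N}_{\tilde\Omega}$ and $t_\varepsilon\to1$, whence $m(\tilde\Omega)\le J_{\tilde\Omega}(t_\varepsilon v_\varepsilon)=m(\mathbb{R}^N)+o(1)$. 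Letting $\varepsilon\to0$ and combining with the previous step gives $m(\tilde\Omega)=m(\mathbb{R}^N)$.

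For the non-achievement, suppose $\tilde\Omega\ne\mathbb{R}^N$ and that $m(\tilde\Omega)$ is attained at some $u\in\mathcal{N}_{\tilde\Omega}$. Its extension by zero $\tilde u$ lies in $\mathcal{N}_{\mathbb{R}^N}$ with $J(\tilde u)=m(\tilde\Omega)=m(\mathbb{R}^N)$, so $\tilde u$ is a whole-space minimizer. Then $|\tilde u|$ is a minimizer as well, and by the uniqueness up to translation and dilation recalled after \eqref{N2} it coincides with a translate and dilate of $U$, which is strictly positive on $\mathbb{R}^N$; in particular $\tilde u\ne0$ almost everywhere. This contradicts $\tilde u\equiv0$ on $\mathbb{R}^N\setminus\tilde\Omega$, a set of positive measure for a proper subdomain, so $m(\tilde\Omega)$ cannot be achieved unless $\tilde\Omega=\mathbb{R}^N$.

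The step I expect to be the main obstacle is the truncation estimate in the upper bound: because the nonlinearity is nonlocal, one cannot simply discard the tail, but must show that the cross contributions in the double integral between $\chi U_\varepsilon$ and the part of $U_\varepsilon$ supported outside $B_\rho(x_0)$ tend to $0$. Controlling these requires combining the Hardy--Littlewood--Sobolev inequality with the decay $U(x)\sim|x|^{-(N-2)}$ of the extremal; once this is in hand, the verification that $t_\varepsilon\to1$ and the remaining steps are routine.
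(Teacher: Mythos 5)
Your proof is correct, but your upper bound $m(\tilde\Omega)\leqslant m(\mathbb{R}^N)$ goes by a genuinely different route than the paper's. The paper takes a minimizing sequence $\{u_n\}\subset C_0^\infty(\mathbb{R}^N)$ for $m(\mathbb{R}^N)$ and simply applies the critical dilation $v_n(x)=\lambda_n^{(N-2)/2}u_n(\lambda_n x+y_n)$, choosing $\lambda_n$ large and $y_n$ so that the (compact) support of $v_n$ lands inside $\tilde\Omega$; because every exponent $2^*_i$ is critical, the Dirichlet term, each nonlocal term, and hence the Nehari constraint are \emph{exactly} invariant under this dilation, so $v_n\in\mathcal{N}_{\tilde\Omega}$ with $J_{\tilde\Omega}(v_n)=J(u_n)$, and no truncation estimate is needed at all. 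You instead concentrate the explicit extremal $U$ at a point of $\tilde\Omega$ and cut it off, which is what forces the error estimates you flag as the main obstacle; these are indeed standard Br\'ezis--Nirenberg-type computations (after undoing the scaling they reduce to $\chi(x_0+\varepsilon\,\cdot\,)U\to U$ in $D^{1,2}$, of order $\varepsilon^{(N-2)/2}$, and then the Hardy--Littlewood--Sobolev inequality gives continuity of the nonlocal terms), so your argument does close. The trade-offs: the paper's rescaling trick is shorter, needs only density of $C_0^\infty$ in $D^{1,2}$ plus scale invariance, and does not require $m(\mathbb{R}^N)$ to be attained, whereas your route leans on the attainment by $U$ cited from \cite{LYY} but produces a quantitative rate. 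Note also that the paper's proof stops at the equality and never argues non-achievement; your third paragraph supplies that missing step, and passing to $|\tilde u|$ before invoking uniqueness of minimizers is the right way to do it. One small caveat there: the claim that the complement of a proper subdomain has positive measure is not literally true (for $\tilde\Omega=\mathbb{R}^N\setminus\{0\}$ the complement is a null set of zero capacity for $N\geqslant3$, and $m(\tilde\Omega)$ \emph{is} achieved), so both the lemma as stated and your proof implicitly assume $|\mathbb{R}^N\setminus\tilde\Omega|>0$; this is harmless here since the lemma is only applied to the bounded domains $\Omega$ and $B_r$.
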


\begin{proof} Obviously $m(\tilde\Omega)\geqslant m(\mathbb{R}^N)$. To prove the inverse inequality, let  $\{u_n\}\subset C^\infty_0(\mathbb{R}^N)$ be a minimizing
sequence of $m(\mathbb{R}^N)$. Choosing $y_n\in \mathbb{R}^N$ and $\lambda_n>0$ so that $v_n(x) = \lambda_n^{\frac{N-2}2}u_n(\lambda_n x+ y_n)\in C^\infty_0(\tilde\Omega)$, hence we
obtain $m(\tilde\Omega)\leqslant m(\mathbb{R}^N)$.

\end{proof}
\bigskip

\begin{lemma}\label{lem8}
  For $\mu\in(0,\mu_1)$, we have $\lim\limits_{\mu\to0}c_{\mu,r}=\lim\limits_{\mu\to0}c_{\mu,\Omega}.$
\end{lemma}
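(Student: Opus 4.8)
The plan is to show that both one-sided limits equal $m(\mathbb{R}^N)$. Since the chain established above,
$$c_{\mu,\Omega}<c_{\mu,r}<m(\mathbb{R}^N),$$
already squeezes $c_{\mu,r}$ between $c_{\mu,\Omega}$ and $m(\mathbb{R}^N)$, it suffices to prove that $\lim_{\mu\to0}c_{\mu,\Omega}=m(\mathbb{R}^N)$; the value of $\lim_{\mu\to0}c_{\mu,r}$ is then forced to be $m(\mathbb{R}^N)$ as well, and the two limits coincide. For the upper bound I would note that, since $\mu>0$ and $p>2$, one has $I_{\mu,\Omega}(tu)\le J_\Omega(tu)$ for every $u$ and every $t\ge0$; using the Nehari characterisation $m(\Omega)=\inf_{u\neq0}\sup_{t>0}J_\Omega(tu)$ together with Lemma \ref{lem8a} gives
$$c_{\mu,\Omega}=\inf_{u\neq0}\sup_{t>0}I_{\mu,\Omega}(tu)\le\inf_{u\neq0}\sup_{t>0}J_\Omega(tu)=m(\Omega)=m(\mathbb{R}^N).$$
Moreover $c_{\mu,\Omega}$ is nonincreasing in $\mu$, because for fixed $u$ the quantity $\sup_{t>0}I_{\mu,\Omega}(tu)$ decreases as $\mu$ grows; hence as $\mu\to0^{+}$ the numbers $c_{\mu,\Omega}$ increase to a limit $\ell\le m(\mathbb{R}^N)$, and $\ell\ge c_{\mu_0,\Omega}>0$ for any fixed $\mu_0$.

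For the matching lower bound I would take a ground state $u_\mu\in\mathcal{N}_{\mu,\Omega}$ with $I_{\mu,\Omega}(u_\mu)=c_{\mu,\Omega}$, whose existence is provided by \cite{LYY}. Writing $D_i(u):=\int_{\mathbb{R}^N}(|x|^{-(N-\alpha_i)}*|u|^{2^*_i})|u|^{2^*_i}\,dx$, the Nehari identity $\langle I'_{\mu,\Omega}(u_\mu),u_\mu\rangle=0$ combined with the energy yields
$$c_{\mu,\Omega}=\mu\Big(\tfrac12-\tfrac1p\Big)|u_\mu|_p^p+\sum_{i=1}^{k}\Big(\tfrac12-\tfrac{1}{2\cdot2^*_i}\Big)D_i(u_\mu),$$
in which every summand is positive. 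From $c_{\mu,\Omega}\le m(\mathbb{R}^N)$ I read off that $\mu|u_\mu|_p^p$ and $\sum_i D_i(u_\mu)$ are bounded; inserting $|\nabla u_\mu|_2^2=\mu|u_\mu|_p^p+\sum_i D_i(u_\mu)$ into the embedding $H^1_0(\Omega)\hookrightarrow L^p(\Omega)$ (valid as $\Omega$ is bounded and $p<2^*$) shows $|u_\mu|_p$ is bounded uniformly in $\mu$, so $\mu|u_\mu|_p^p\to0$. Using $\ell\ge c_{\mu_0,\Omega}>0$ I also obtain $\sum_i D_i(u_\mu)=c_{\mu,\Omega}-\mu(\tfrac12-\tfrac1p)|u_\mu|_p^p\ge c_0>0$ for all small $\mu$.

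The final step is a rescaling onto the limiting Nehari manifold. Extending $u_\mu$ by zero, I choose $s_\mu>0$ with $s_\mu u_\mu\in\mathcal{N}_{\mathbb{R}^N}$, that is $|\nabla u_\mu|_2^2=\sum_i s_\mu^{2\cdot2^*_i-2}D_i(u_\mu)$; subtracting the Nehari identity gives
$$\sum_{i=1}^{k}\big(s_\mu^{2\cdot2^*_i-2}-1\big)D_i(u_\mu)=\mu|u_\mu|_p^p\longrightarrow0.$$
Because $\sum_i D_i(u_\mu)\ge c_0>0$ and each exponent $2\cdot2^*_i-2>0$, this forces $s_\mu\to1$, so $s_\mu$ stays bounded. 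Since $s_\mu u_\mu$ maximises $t\mapsto J_\Omega(tu_\mu)$ while $u_\mu$ maximises $t\mapsto I_{\mu,\Omega}(tu_\mu)$,
$$c_{\mu,\Omega}=\sup_{t>0}I_{\mu,\Omega}(tu_\mu)\ge I_{\mu,\Omega}(s_\mu u_\mu)=\sup_{t>0}J_\Omega(tu_\mu)-\frac{\mu s_\mu^{p}}{p}|u_\mu|_p^p\ge m(\mathbb{R}^N)-\frac{\mu s_\mu^{p}}{p}|u_\mu|_p^p,$$
where the last inequality uses $\sup_{t>0}J_\Omega(tu_\mu)\ge m(\Omega)=m(\mathbb{R}^N)$ from Lemma \ref{lem8a}. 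Letting $\mu\to0$ and recalling $\mu|u_\mu|_p^p\to0$ with $s_\mu$ bounded gives $\ell\ge m(\mathbb{R}^N)$, hence $\lim_{\mu\to0}c_{\mu,\Omega}=m(\mathbb{R}^N)$; the squeeze $c_{\mu,\Omega}<c_{\mu,r}<m(\mathbb{R}^N)$ then yields $\lim_{\mu\to0}c_{\mu,r}=m(\mathbb{R}^N)$ as well, which proves the claim. I expect the principal difficulty to lie precisely in controlling the rescaling factor $s_\mu$: showing that it remains bounded (equivalently $s_\mu\to1$) hinges on the lower bound $\sum_i D_i(u_\mu)\ge c_0>0$, and it is here that the monotonicity of $c_{\mu,\Omega}$ in $\mu$ is indispensable.
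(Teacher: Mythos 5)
Your proof is correct, and it runs on the same engine as the paper's: both arguments identify the common limit as $m(\mathbb{R}^N)$ by rescaling the ground states of $I_{\mu,\Omega}$ (whose existence comes from \cite{LYY}) onto the Nehari manifold of the $\mu=0$ functional and then invoking Lemma \ref{lem8a}. The differences are in execution, and each buys something. First, you prove the two-sided limit only for $\Omega$ and dispose of $c_{\mu,r}$ by the squeeze $c_{\mu,\Omega}<c_{\mu,r}<m(\mathbb{R}^N)$, whereas the paper repeats the whole two-sided argument for $B_r$ and concludes from $m(\Omega)=m(B_r)=m(\mathbb{R}^N)$; your shortcut is legitimate and shorter. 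Second, your upper bound $c_{\mu,\Omega}\leqslant m(\Omega)$ via the pointwise comparison $I_{\mu,\Omega}(tu)\leqslant J_{\Omega}(tu)$ together with the inf-sup characterizations is a cleaner rendering of the paper's step, which instead scales a fixed $u\in\mathcal{N}_{\Omega}$ to $s_nu\in\mathcal{N}_{\mu_n,\Omega}$ with $s_n\leqslant1$. Third, and this is the substantive divergence: for the lower bound the paper scales the ground state by $t_n\geqslant1$ onto $\mathcal{N}_{\Omega}$ and compares energies through the monotonicity of $h(t)=\tfrac12 t^2-\tfrac1q t^q$ on $t\geqslant1$, which needs only that $t_n\geqslant1$ and stays bounded; you instead prove $s_\mu\to1$, which forces you to import two extra ingredients, namely the monotonicity of $\mu\mapsto c_{\mu,\Omega}$ and the uniform positivity $c_{\mu_0,\Omega}>0$ (the latter is not stated in the paper for $I_{\mu,\Omega}$, but follows exactly as the $\sigma(\mu)$ bound in Lemma \ref{lem6}). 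Both routes work: the paper's $h$-monotonicity trick is more economical, while yours yields the explicit error term $\tfrac{\mu s_\mu^{p}}{p}|u_\mu|_p^p\to0$. One cosmetic slip: in the line where you isolate $\sum_i D_i(u_\mu)$ from the energy identity you dropped the factors $\tfrac12-\tfrac{1}{2\cdot2^*_i}$; reinstating them changes nothing, since these factors are bounded between positive constants, and the lower bound $\sum_i D_i(u_\mu)\geqslant c_0>0$ still follows.
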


\begin{proof} For any  $\mu_n\in(0,\mu_1)$ and $\mu_n\to 0$ as $n\to\infty$, by \cite{LYY} there exists a solution $u_n$ of \eqref{p1} such that $I_{\mu_n,\Omega}(u_n)=c_{\mu_n,\Omega} $.
We may show as Lemma \ref{lem3} that $\{u_n\}$ is uniformly bounded in  $H^1_0(\Omega)$. Moreover, there is a unique $t_n\in \mathbb{R}^+$ such that $t_nu_n\in\mathcal{N}_{\Omega}$, that is,
\[
\int_\Omega|\nabla u_n|^2\,dx =\sum\limits_{i=1}^kt_n^{22^*_i-2}\int_{\Omega}(|x|^{-(N-\alpha_i)}*|u_n|^{2^*_i})|u_n|^{2^*_i}\,dx.
\]
Because $u_n$ is bounded, so does $t_n$.
Since $u_n\in\mathcal{N}_{\mu_n,\Omega}$, we deduce
\[
\begin{split}
&\sum\limits_{i=1}^kt_n^{22^*_i-2}\int_{\Omega}(|x|^{-(N-\alpha_i)}*|u_n|^{2^*_i})|u_n|^{2^*_i}\,dx\\
& =\mu_n\int_\Omega|u_n|^p\,dx+\sum\limits_{i=1}^k \int_{\Omega}(|x|^{-(N-\alpha_i)}*|u_n|^{2^*_i})|u_n|^{2^*_i}\,dx\\
&\geqslant\sum\limits_{i=1}^k \int_{\Omega}(|x|^{-(N-\alpha_i)}*|u_n|^{2^*_i})|u_n|^{2^*_i}\,dx
\end{split}
\]
implying $t_n\geqslant 1$.

Note that for $q>2$, the function $h(t)=\frac 12t^2-\frac 1qt^q$ is decreasing if $t\geqslant 1$ and increasing if $t\leqslant 1$. In particular, $h(t)$ is decreasing  for both $q=p$ and $q=22^*_i,i=1,2,\cdots k$ whenever  $t\geqslant1$. Therefore,
\[
\begin{split}
&I_{\mu_n,\Omega}(u_n)-I_{\mu_n,\Omega}(t_nu_n)\\
&=\left[\left(\frac{1}{2}-\frac{1}{p}\right)-\left(\frac{t_n^2}{2}-\frac{t_n^p}{p}\right)\right]\mu_n\int_\Omega|u_n|^p\,dx\\
&+\sum\limits_{i=1}^k\left[\left(\frac{1}{2}-\frac{1}{{22^*_i}}\right)-\left(\frac{t_n^2}{2}-\frac{t_n^{22^*_i}}{{22^*_i}}\right)\right]\int_{\Omega}\int_\Omega\frac{|u_n(x)|^{2^*_i}|u_n(y)|^{2^*_i}}{|x-y|^{N-\alpha_i}}\,dxdy\\
  \end{split}
  \]
 yields
 $$m(\Omega)\leqslant I_{\mu_n,\Omega}(t_nu_n)\leqslant I_{\mu_n,\Omega}(u_n)\leqslant c_{\mu_n,\Omega}+\frac{\mu_n}{p}\int_\Omega|t_nu_n|^p\,dx,$$
  and then
  \[
  m(\Omega)\leqslant \liminf_{n\to\infty}c_{\mu_n,\Omega}.
  \]

 On the other hand, for each $u\in \mathcal{N}_{\Omega}$, there exists $s_n>0$ such that $s_n u\in \mathcal{N}_{\mu_n,\Omega}$, alternatively,
 \[
 \begin{split}
&\sum\limits_{i=1}^k\int_{\Omega}(|x|^{-(N-\alpha_i)}*|u|^{2^*_i})|u|^{2^*_i}\,dx\\
& =\mu_ns_n^{p-2}\int_\Omega|u|^p\,dx+\sum\limits_{i=1}^k s_n^{22^*_i-2}\int_{\Omega}(|x|^{-(N-\alpha_i)}*|u|^{2^*_i})|u|^{2^*_i}\,dx\\
&\geqslant\sum\limits_{i=1}^k s_n^{22^*_i-2}\int_{\Omega}(|x|^{-(N-\alpha_i)}*|u|^{2^*_i})|u|^{2^*_i}\,dx
\end{split}
 \]
yielding $s_n\leqslant 1$. As a result,
\[
\begin{split}
c_{\mu_n,\Omega}= m_{\mu_n,\Omega}\leqslant I_{\mu_n,\Omega}(s_n u)\leqslant I_{\mu_n,\Omega}(u)\leqslant J(u),
\end{split}
\]
 which yields
 \[
 \limsup_{n\to\infty}c_{\mu_n,\Omega}\leqslant m(\Omega).
 \]
Consequently,  $m(\Omega)=\lim\limits_{n\to\infty}c_{\mu_n,\Omega}$. Similarly, $m(B_r)=\lim\limits_{n\to\infty}c_{\mu_n,B_r}$.
Lemma \ref{lem8a} then implies $$\lim\limits_{\mu\to0}c_{\mu,r}=\lim\limits_{\mu\to0}c_{\mu,\Omega}.$$
\end{proof}

\bigskip

Similarly, denote $I_{\lambda,\mu}^{c}=\{u\in \mathcal{N}_{\lambda,\mu}: I_{\lambda,\mu}\leqslant c\}$ the level set.
\begin{proposition}
\label{prop3}
 There exits $\mu^*\in(0,\mu_1)$ such that for each $0<\mu<\mu^*$ there is $\Lambda(\mu)>0$ so that if $u\in I_{\lambda,\mu}^{ c_{\mu,r}}$ there holds $\beta(u)\in\Omega^+_{2r}$ provided that $\lambda\geqslant \Lambda(\mu)$.
\end{proposition}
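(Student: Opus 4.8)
The plan is to argue by contradiction and transfer the localization of the barycenter already available for the limit functional $I_{\mu,\Omega}$ (Lemma~\ref{lem7}) to $I_{\lambda,\mu}$ by a concentration argument as $\lambda\to\infty$. Take $\mu^*$ to be the constant produced by Lemma~\ref{lem7}, fix $0<\mu<\mu^*$, and suppose the claim fails. Then there exist $\lambda_n\to\infty$ and $u_n\in I_{\lambda_n,\mu}^{c_{\mu,r}}$, i.e. $u_n\in\mathcal{N}_{\lambda_n,\mu}$ with $I_{\lambda_n,\mu}(u_n)\leqslant c_{\mu,r}$, such that $\beta(u_n)\notin\Omega^+_{2r}$. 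Since $c_{\lambda_n,\mu}\leqslant I_{\lambda_n,\mu}(u_n)\leqslant c_{\mu,r}<m(\mathbb{R}^N)-\tau$, arguing as in Lemma~\ref{lem3} shows $\{u_n\}$ is bounded in $E$; by Lemma~\ref{lem1} we may assume $u_n\rightharpoonup u$ weakly in $E$ with $u\in H^1_0(\Omega)$ and $u_n\to u$ in $L^2(\mathbb{R}^N)$ and in $L^p(\mathbb{R}^N)$.

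The heart of the proof is to upgrade this to strong convergence $u_n\to u$ in $E$, hence in $D^{1,2}(\mathbb{R}^N)$, following the scheme of Proposition~\ref{prop1} and Theorem~\ref{th2}. Set $w_n=u_n-u$. Because $Vu=0$ a.e., one has $I_{\lambda_n,\mu}(u)=I_{\mu,\Omega}(u)$ and $(L_{\lambda_n}u_n,u_n)=\int_{\mathbb{R}^N}|\nabla u|^2\,dx+(L_{\lambda_n}w_n,w_n)+o(1)$; combined with the Br\'ezis--Lieb identity \eqref{eq10} and $|u_n|_p\to|u|_p$, this splits both $I_{\lambda_n,\mu}(u_n)$ and the Nehari relation $\langle I'_{\lambda_n,\mu}(u_n),u_n\rangle=0$ into a $u$-part and a $w_n$-part. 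Writing $I_{\lambda_n,\mu}(u_n)=I_{\lambda_n,\mu}(u_n)-\tfrac1p\langle I'_{\lambda_n,\mu}(u_n),u_n\rangle$ one obtains $I_{\lambda_n,\mu}(u_n)=A_u+B_n+o(1)$, where $A_u=(\tfrac12-\tfrac1p)\int_{\mathbb{R}^N}|\nabla u|^2\,dx+\sum_i(\tfrac1p-\tfrac1{2\cdot2^*_i})\int_{\mathbb{R}^N}(|x|^{-(N-\alpha_i)}*|u|^{2^*_i})|u|^{2^*_i}\,dx\geqslant0$ (all coefficients are positive since $2<p<2\cdot2^*_i$) and $B_n$ is the analogous quantity for $w_n$ built from $(L_{\lambda_n}w_n,w_n)$. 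Assuming $w_n\not\to0$, I would project $w_n$ onto $\mathcal{N}_{\lambda_n,\mu}$ and then onto $\mathcal{N}_{\mathbb{R}^N}$ (the fibering maps have unique maxima, as in Proposition~\ref{prop1}) and use $I_{\lambda_n,\mu}(v)\geqslant J(v)-\tfrac{\mu}{p}|v|_p^p=J(v)+o(1)$ together with $J\geqslant m(\mathbb{R}^N)$ on $\mathcal{N}_{\mathbb{R}^N}$ and $A_u\geqslant0$ to reach $I_{\lambda_n,\mu}(u_n)\geqslant m(\mathbb{R}^N)+o_n(1)$, contradicting $I_{\lambda_n,\mu}(u_n)\leqslant c_{\mu,r}<m(\mathbb{R}^N)-\tau$. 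Hence $w_n\to0$ in $E$. Strong convergence then forces $u\neq0$ (otherwise the Nehari lower bound $\|u_n\|_{\lambda_n}\geqslant\sigma$ from Lemma~\ref{lem6} is violated, as the $L^p$ and critical terms of $u_n$ would vanish), yields $u\in\mathcal{N}_{\mu,\Omega}$ by passing to the limit in the Nehari identity, and gives $I_{\mu,\Omega}(u)\leqslant c_{\mu,r}$; thus $u\in I_{\mu,\Omega}^{c_{\mu,r}}$.

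Finally, strong convergence in $D^{1,2}(\mathbb{R}^N)$ makes $|u_n|^{2^*_i}\to|u|^{2^*_i}$ in $L^{2N/(N+\alpha_i)}(\mathbb{R}^N)$, so by the Hardy--Littlewood--Sobolev inequality all the double integrals defining $\beta$ converge, the denominators staying bounded away from $0$ because $u\neq0$; since $\eta$ is bounded and continuous, $\beta(u_n)\to\beta(u)$. But $u\in I_{\mu,\Omega}^{c_{\mu,r}}$ and $\mu<\mu^*$, so Lemma~\ref{lem7} gives $\beta(u)\in\Omega^+_r$; as $\Omega^+_r\subset\Omega^+_{2r}$ and $\Omega^+_{2r}$ is open, $\beta(u_n)\in\Omega^+_{2r}$ for $n$ large, contradicting $\beta(u_n)\notin\Omega^+_{2r}$. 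Since this contradiction arises along an arbitrary sequence $\lambda_n\to\infty$, there is $\Lambda(\mu)>0$ so that the conclusion holds for all $\lambda\geqslant\Lambda(\mu)$.

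\emph{Main obstacle.} The delicate point is the strong convergence step. Unlike in Theorem~\ref{th2}, the functions $u_n$ here lie only on the Nehari manifold and are not even asymptotically critical points, so the weak limit $u$ need not solve \eqref{p1}; consequently the projection parameters need not tend to $1$, and one cannot simply invoke the $(PS)$ analysis of Proposition~\ref{prop1}. The argument must instead isolate the nonnegative subcritical contribution $A_u$ of the weak limit and show that any nonvanishing residual $w_n$ would carry at least the threshold energy $m(\mathbb{R}^N)$ of the limiting Choquard problem \eqref{p2}; the strict gap $c_{\mu,r}<m(\mathbb{R}^N)-\tau$ is precisely what excludes such a concentrating bubble. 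Carrying out this splitting while simultaneously handling the critical Choquard nonlinearity and the diverging potential $\lambda_n V$ is the technical crux.
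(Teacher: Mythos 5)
Your setup (contradiction along $\lambda_n\to\infty$, boundedness as in Lemma~\ref{lem3}, weak limit $u\in H^1_0(\Omega)$ with $L^2$ and $L^p$ convergence) and your closing step (strong convergence $\Rightarrow\beta(u_n)\to\beta(u)$, then Lemma~\ref{lem7}) both match the paper. The genuine gap is the central claim that $w_n=u_n-u\not\to0$ in $E$ always forces $I_{\lambda_n,\mu}(u_n)\geqslant m(\mathbb{R}^N)+o(1)$. Writing $D_i(w)=\int_{\mathbb{R}^N}\int_{\mathbb{R}^N}|w(x)|^{2^*_i}|w(y)|^{2^*_i}|x-y|^{-(N-\alpha_i)}\,dxdy$, the Br\'ezis--Lieb splitting of the Nehari identity of $u_n$ gives $\langle I'_{\lambda_n,\mu}(w_n),w_n\rangle=-\langle I'_{\mu,\Omega}(u),u\rangle+o(1)$, so everything hinges on the sign of $\langle I'_{\mu,\Omega}(u),u\rangle$, which is \emph{not} under control here because $\{u_n\}$ is not a $(PS)$ sequence and $u$ need not solve \eqref{p1}. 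If $\langle I'_{\mu,\Omega}(u),u\rangle<0$, then the number $t_n$ with $t_nw_n\in\mathcal{N}_{\lambda_n,\mu}$ satisfies $t_n>1$ (and diverges if the $D_i(w_n)$ vanish), and since every term of your $B_n$ and of $I_{\lambda_n,\mu}(t_nw_n)=\bigl(\tfrac12-\tfrac1p\bigr)t_n^2(L_{\lambda_n}w_n,w_n)+\sum_i\bigl(\tfrac1p-\tfrac1{2\cdot2^*_i}\bigr)t_n^{2\cdot2^*_i}D_i(w_n)$ has a positive coefficient and is increasing in $t_n$, the inequality you need, $B_n\geqslant I_{\lambda_n,\mu}(t_nw_n)+o(1)$, reverses to $B_n\leqslant I_{\lambda_n,\mu}(t_nw_n)$; the lower bound $I_{\lambda_n,\mu}(t_nw_n)\geqslant m(\mathbb{R}^N)+o(1)$ then transfers nothing to $I_{\lambda_n,\mu}(u_n)=A_u+B_n+o(1)$. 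Worse, the heuristic ``a nonvanishing residual carries at least $m(\mathbb{R}^N)$'' is false for mere Nehari sequences: let $v$ be a ground state of $I_{\mu,\Omega}$ on $\mathcal{N}_{\mu,\Omega}$, fix $\theta>1$, so that $\langle I'_{\mu,\Omega}(\theta v),\theta v\rangle<0$, and set $z_n=c\,n^{-1}\psi(x)\sin(nx_1)$ with $\psi\in\mathcal{C}^\infty_c(\Omega)$ and $c$ chosen so that $\lim_n\|\nabla z_n\|_2^2=-\langle I'_{\mu,\Omega}(\theta v),\theta v\rangle$. Projecting $\theta v+z_n$ onto $\mathcal{N}_{\lambda_n,\mu}$ yields $u_n\in\mathcal{N}_{\lambda_n,\mu}$ with $u_n\rightharpoonup\theta v$, $u_n\not\to\theta v$ in $E$ (the noise keeps gradient energy while its $L^p$ and Choquard norms vanish), and $I_{\lambda_n,\mu}(u_n)$ converging to a level that tends to $c_{\mu,\Omega}<c_{\mu,r}$ as $\theta\to1^+$; so such sequences exist below the level $c_{\mu,r}$, and strong convergence genuinely fails there. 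Your route cannot be repaired.

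This is precisely where the paper's proof forks into two cases according to the sign of $\langle I'_{\mu,\Omega}(u),u\rangle$. In Case~1 ($\geqslant0$) the argument is yours: $t_n\leqslant1$, the chain of inequalities closes against $m(\mathbb{R}^N)-\tau$, $u_n\to u$ strongly, and Lemma~\ref{lem7} applies. In Case~2 ($<0$) the paper abandons strong convergence altogether: there is $t_\mu\in(0,1)$ with $t_\mu u\in\mathcal{N}_{\mu,\Omega}$, the energies are pinched via $c_{\mu,\Omega}\leqslant I_{\mu,\Omega}(t_\mu u)\leqslant\lim_nI_{\lambda_n,\mu}(u_n)\leqslant c_{\mu,r}$, and Lemma~\ref{lem8} (namely $c_{\mu,r}-c_{\mu,\Omega}\to0$ as $\mu\to0$) squeezes this window so that $D_i(u_n)-t_\mu^{2\cdot2^*_i}D_i(u)\to0$ for each $i$; since $\beta$ is built from exactly these quantities and $\eta$ is bounded, $|\beta(u_n)-\beta(t_\mu u)|<r$ for $\mu$ small and $n$ large, and Lemma~\ref{lem7} applied to $t_\mu u$ contradicts $\beta(u_n)\notin\Omega^+_{2r}$. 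Two consequences for your write-up: the result cannot be reduced to strong convergence, and your choice of $\mu^*$ as the constant of Lemma~\ref{lem7} is insufficient --- the smallness of $\mu$ enters a second time, through Lemma~\ref{lem8}, to make the Case-2 energy window narrow, which is why the paper's proof starts from ``$\mu$ arbitrarily small''.
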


\begin{proof} We argue indirectly. Suppose on the contrary that for $\mu$ arbitrarily small,  there is a sequence $\{u_n\}$ such that $u_n\in \mathcal{N}_{\lambda_n,\mu}, \lambda_n\to\infty, I_{\lambda_n,\mu}(u_n)\to c\leqslant c_{\mu,r}$ and $\beta(u_n)\notin\Omega^+_{2r}$. We may show as Lemma \ref{lem3} that $\{u_n\}$ is uniformly bounded in $E$. By Lemma \ref{lem1}, there is $u_\mu\in H^1_0(\Omega)$ such that $u_n\rightharpoonup u_\mu$ weakly in $E$ and $u_n\to u_\mu$ in $L^2(\mathbb{R}^N)$.  The interpolation inequality yields $u_n\to u_\mu$ in $L^p(\mathbb{R}^N)$. Now, we distinguish two cases to discuss.

\emph{Case 1: }$\sum\limits_{i=1}^k\int_{\Omega}\int_{\Omega}\frac {|u_\mu(x)|^{2^*_i})|u_\mu(y)|^{2^*_i}}{|x-y|^{N-\alpha_i}}\,dx\leqslant\int_{\Omega}|\nabla u_\mu|^2\,dx-\mu\int_\Omega|u_\mu|^p\,dx.$

Let $w_n=u_n-u$.  First, we show $w_n\to 0$ in $E$. Indeed, were it not the case, we would have $w_n\not\to 0$ in $E$ as $n\to\infty$. By the Br\'{e}zis-Lieb type lemma,
  \begin{equation}
      \label{eq18}
      c+o(1)=I_{\lambda_n,\mu}(u_n)= I_{\lambda_n,\mu}(w_n) + I_{\lambda_n,\mu}(u_\mu)+o(1)\geqslant I_{\lambda_n,\mu}(w_n)+o(1)
  \end{equation}
  and  by the assumption,
\begin{equation} \label{eq19}
\begin{split}
 o(1) &= \langle I_{\lambda_n,\mu}'(u_n), u_n\rangle = \langle I_{\lambda_n,\mu}'(w_n), w_n\rangle\\ & +
 \int_{\Omega}|\nabla u_\mu|^2\,dx -\mu\int_{\Omega}|u_\mu|^p\,dx-\sum\limits_{i=1}^k\int_{\Omega}\int_{\Omega}\frac {|u_\mu(x)|^{2^*_i})|u_\mu(y)|^{2^*_i}}{|x-y|^{N-\alpha_i}}\,dx+o(1)\\
 &\leqslant\langle I_{\lambda_n,\mu}'(w_n), w_n\rangle + o(1).
 \end{split}
    \end{equation}
 That is,
  \begin{equation} \label{eq19}
  \begin{split}
 &\int_{\mathbb{R}^N}(|\nabla w_n|^2+\lambda V(x)w_n^2)\,dx\\
 &\leqslant\mu\int_{\mathbb{R}^N}|w_n|^p\,dx+\sum\limits_{i=1}^k\int_{\mathbb{R}^N}\int_{\mathbb{R}^N}\frac{|w_n(x)|^{2^*_i}|w_n(y)|^{2^*_i}}{|x-y|^{N-\alpha_i}}\,dxdy+o(1).\\
 \end{split}
    \end{equation}
As before, we can find  a unique constant $t_n>0$ such that  $t_nw_n\in\mathcal{N}_{\lambda_n,\mu}$, which and \eqref{eq19} yield
\begin{equation}
\label{eq20}
 \begin{aligned}
&t_n^{p-2}\mu|w_n|^p_p+\sum\limits_{i=1}^kt_n^{22^*_i-2}\int_{\mathbb{R}^N}\int_{\mathbb{R}^N}\frac{|w_n(x)|^{2^*_i}|w_n(y)|^{2^*_i}}{|x-y|^{N-\alpha_i}}\,dxdy\\
&\leqslant\mu|w_n|^p_p+\sum\limits_{i=1}^k\int_{\mathbb{R}^N}\int_{\mathbb{R}^N}\frac{|w_n(x)|^{2^*_i}|w_n(y)|^{2^*_i}}{|x-y|^{N-\alpha_i}}\,dxdy+o(1),
\end{aligned}
\end{equation}
then $t_n\leqslant1$. By \eqref{eq18} and \eqref{eq20},
\begin{equation*}
    \begin{aligned}
       &c+o(1)=I_{\lambda_n,\mu}(u_n)\\
       &\geqslant \left(\frac{1}{2}-\frac{1}{p}\right)\mu\int_\Omega|w_n|^p\,dx+\sum\limits_{i=1}^k\left(\frac{1}{2}-
       \frac{1}{{22^*_i}}\right)\int_{\Omega}\int_\Omega\frac{|w_n(x)|^{2^*_i}|w_n(y)|^{2^*_i}}{|x-y|^{N-\alpha_i}}\,dxdy+o(1)\\
       &\geqslant\left(\frac{1}{2}-\frac{1}{p}\right)\mu\int_\Omega|t_nw_n|^p\,dx+
       \sum\limits_{i=1}^k\left(\frac{1}{2}-\frac{1}{{22^*_i}}\right)t_n^{22^*_i}\int_{\Omega}\int_\Omega\frac{|w_n(x)|^{2^*_i}|w_n(y)|^{2^*_i}}{|x-y|^{N-\alpha_i}}\,dxdy+o(1)\\
       &=I_{\lambda_n,\mu}(t_nw_n)+o(1).
    \end{aligned}
\end{equation*}
Since $v_n:=t_nw_n\in\mathcal{N}_{\lambda_n,\mu}$, we have
\begin{equation}
\label{eq20a}
I_{\lambda_n,\mu}(v_n) = \sup_{s\geq0}I_{\lambda_n,\mu}(sv_n).
\end{equation}
Similarly, there exists a unique $s_n\in\mathbb{R}^+$ such that  $s_nv_n\in\mathcal{N}_{\mathbb{R}^N}$. It follows that
\begin{equation}
\label{eq20b}
 \begin{aligned}
&\sum\limits_{i=1}^ks_n^{22^*_i-2}\int_{\mathbb{R}^N}\int_{\mathbb{R}^N}\frac{|v_n(x)|^{2^*_i}|v_n(y)|^{2^*_i}}{|x-y|^{N-\alpha_i}}\,dxdy\\
&=\int_{\mathbb{R}^N}|\nabla v_n|^2\,dx\leqslant \int_{\mathbb{R}^N}(|\nabla v_n|^2+\lambda_nVv_n^2)\,dx\\
&=\mu\int_{\mathbb{R}^N}|v_n|^p\,dx+\sum\limits_{i=1}^k\int_{\mathbb{R}^N}\int_{\mathbb{R}^N}\frac{|v_n(x)|^{2^*_i}|v_n(y)|^{2^*_i}}{|x-y|^{N-\alpha_i}}\,dxdy,
\end{aligned}
\end{equation}
which implies $\{s_n\}$ is bounded. Therefore, by \eqref{eq20a} and the fact that $u_n\to u_\mu$ in $L^p(\mathbb{R}^N)$ we deduce
\begin{align*}
m(\mathbb{R}^N)-\tau> I_{\lambda_n,\mu}(v_n)+o_n(1)\geqslant I_{\lambda_n,\mu}(s_nv_n)+o_n(1)\geqslant J(s_nv_n) +  o_n(1)\geqslant m(\mathbb{R}^N)+o_n(1)
\end{align*}
a contradiction. Hence, $u_n\to u_\mu$ in $E$. As a result, $\beta(u_n)\to\beta(u_\mu)$. Moreover, by Lemma \ref{lem1}, $$I_{\mu,\Omega}(u_\mu)\leqslant\lim\limits_{n\to\infty}I_{\lambda_n,\mu}(u_n)\leqslant c_{\mu,r}.$$
Whence by Lemma \ref{lem7}, $\beta(u_\mu)\in\Omega^+_r$, which is a contradiction to $\beta(u_n)\notin\Omega^+_{2r}$.

   \emph{Case 2: } $\sum\limits_{i=1}^k\int_{\Omega}(|x|^{-(N-\alpha_i)}*|u_\mu|^{2^*_i})|u_\mu|^{2^*_i}\,dx>\int_{\Omega}|\nabla u_\mu|^2\,dx-\mu\int_\Omega|u_\mu|^p\,dx.$

  It is known that there exists $t_\mu>0$ such that $t_\mu u_\mu\in \mathcal{N}_{\mu,\Omega}$. By the assumption,
 \[
 \begin{split}
& t_\mu^{p-2}\mu\int_\Omega|u_\mu|^p\,dx + \sum\limits_{i=1}^kt_\mu^{22^*_i-2}\int_{\Omega}\int_{\Omega}\frac{|u_\mu(x)|^{2^*_i}|u_\mu(y)|^{2^*_i}}{|x-y|^{N-\alpha_i}}\,dxdy\\
&=\int_\Omega|\nabla u_\mu|^2\,dx < \mu\int_\Omega|u_\mu|^p\,dx + \sum\limits_{i=1}^k\int_{\Omega}\int_{\Omega}\frac{|u_\mu(x)|^{2^*_i}|u_\mu(y)|^{2^*_i}}{|x-y|^{N-\alpha_i}}\,dxdy,
 \end{split}
 \]
which  implies $t_\mu\in(0,1)$. Since  $p<2^*_{min}$, we have
\begin{align*}
       &c_{\mu,\Omega}\leqslant I_{\mu,\Omega}(t_\mu u_\mu)\\
       &\leqslant t_\mu^p\left[\left(\frac{1}{2}-\frac{1}{p}\right)\mu\int_\Omega|u_\mu|^p\,dx+\sum\limits_{i=1}^k\left(\frac{1}{2}
       -\frac{1}{{22^*_i}}\right)\sum\limits_{i=1}^k\int_{\Omega}\int_{\Omega}\frac{|u_\mu(x)|^{2^*_i}|u_\mu(y)|^{2^*_i}}{|x-y|^{N-\alpha_i}}\,dxdy\right]\\
       &\leqslant t_\mu^p\lim\limits_{n\to\infty}\left[\left(\frac{1}{2}-\frac{1}{p}\right)\mu\int_{\mathbb{R}^N}|u_n|^p\,dx+\sum\limits_{i=1}^k\left(\frac{1}{2}-\frac{1}{{22^*_i}}\right)
       \sum\limits_{i=1}^k\int_{\mathbb{R}^N}\int_{\mathbb{R}^N}\frac{|u_n(x)|^{2^*_i}|u_n(y)|^{2^*_i}}{|x-y|^{N-\alpha_i}}\,dxdy\right]\\
       &\leqslant\lim_{n\to\infty}I_{\lambda_n,\mu}(u_n)\leqslant c_{\mu,r}.\\
   \end{align*}
Thus, by Lemma \ref{lem8}, for $n$ large we obtain
\begin{equation}\label{eq21}
    \left|I_{\lambda_n,\mu}(u_n)-I_{\mu,\Omega}(t_\mu u_\mu)\right|\leqslant c_{\mu,r}-c_{\mu,\Omega}\to 0
\end{equation}
as $\mu\to 0$.
Observe that
\begin{align*}
       &I_{\lambda_n,\mu}(u_n)-I_{\mu,\Omega}(t_\mu u_\mu)\\
       =&\bigg(\frac{1}{2}-\frac{1}{p}\bigg)\mu\bigg(\int_{\mathbb{R}^N}|u_n|^p\,dx-t_\mu^p\int_\Omega|u_\mu|^p\,dx\bigg)\\
       +&\sum\limits_{i=1}^k\left(\frac{1}{2}-\frac{1}{{22^*_i}}\right)\bigg(\int_{\mathbb{R}^N}\int_{\mathbb{R}^N}\frac{|u_n(x)|^{2^*_i}|u_n(y)|^{2^*_i}}{|x-y|^{N-\alpha_i}}\,dxdy
       -t_\mu^{22^*_i}\int_{\Omega}\int_{\Omega}\frac{|u_n(x)|^{2^*_i}|u_n(y)|^{2^*_i}}{|x-y|^{N-\alpha_i}}\,dxdy\bigg),\\
   \end{align*}
   and both $|u_n|^p_p$ and $\int_\Omega|t_\mu u_\mu|^p\,dx$ are bounded, we infer from  \eqref{eq21} that
   \begin{align*}
     \int_{\mathbb{R}^N}\int_{\mathbb{R}^N}\frac{|u_n(x)|^{2^*_i}|u_n(y)|^{2^*_i}}{|x-y|^{N-\alpha_i}}\,dxdy
       -t_\mu^{22^*_i}\int_{\Omega}\int_{\Omega}\frac{|u_n(x)|^{2^*_i}|u_n(y)|^{2^*_i}}{|x-y|^{N-\alpha_i}}\,dxdy\to 0
   \end{align*}
   for $n$ large enough and as $\mu\to0,\,i=1,2,\cdots,k.$
   Extending $u_\mu$ to $\mathbb{R}^N$ by setting $u_\mu=0$ outside $\Omega$,  we see that
   \begin{align*}
   &\bigg|\int_{\mathbb{R}^N}\int_{\mathbb{R}^N}\eta(x)\left[\frac{|u_n(x)|^{2^*_i}|u_n(y)|^{2^*_i}}{|x-y|^{N-\alpha_i}}
   -\frac{|t_\mu u_\mu(x)|^{2^*_i}|t_\mu u_\mu(y)|^{2^*_i}}{|x-y|^{N-\alpha_i}}\right]\,dxdy\bigg|\\
   &\leqslant C\bigg|\int_{\mathbb{R}^N}\int_{\mathbb{R}^N}\left[\frac{|u_n(x)|^{2^*_i}|u_n(y)|^{2^*_i}}{|x-y|^{N-\alpha_i}}
   -\frac{|t_\mu u_\mu(x)|^{2^*_i}|t_\mu u_\mu(y)|^{2^*_i}}{|x-y|^{N-\alpha_i}}\right]\,dxdy\bigg|
   \end{align*}
  tends to $0$ as $\mu\to 0$. So for $\mu>0$ small enough and $n$ large, we have
   $$|\beta(u_n)-\beta(tu_\mu)|<r.$$
   It is known from  Lemma \ref{lem7} that $\beta(tu_\mu)\in\Omega^+_r$, whereas $\beta(u_n)\notin\Omega^+_{2r}$. The  contradiction completes the proof.
\end{proof}

\bigskip

 We know from  \cite{LYY} and \cite{Moroz2} that there exists a radially symmetric minimizer $v>0$ of $I_{\mu,B_r}$ on $\mathcal{N}_{\mu,B_r}$ for $\mu\in(0,\mu_1)$.
 It allows us to estimate the category of the level set of $I_{\lambda,\mu}$.
\begin{lemma}\label{cat}
  If $N\geqslant4$ and $\mu\in(0,\mu^*)$, for $\lambda\geqslant\Lambda(\mu)$, then
  $$cat_{I_{\lambda,\mu}^{ c_{\mu,r}}}(I_{\lambda,\mu}^{ c_{\mu,r}})\geqslant cat_\Omega(\Omega).$$
\end{lemma}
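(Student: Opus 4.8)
The plan is to invoke the standard Lusternik--Schnirelman comparison principle: if one can produce continuous maps
$$\Omega^-_r \xrightarrow{\ \Phi\ } I_{\lambda,\mu}^{c_{\mu,r}} \xrightarrow{\ \beta\ } \Omega^+_{2r}$$
whose composition $\beta\circ\Phi$ is homotopic to the natural inclusion $j:\Omega^-_r\hookrightarrow\Omega^+_{2r}$, then $cat_{I_{\lambda,\mu}^{c_{\mu,r}}}\big(I_{\lambda,\mu}^{c_{\mu,r}}\big)\geqslant cat_{\Omega^+_{2r}}(\Omega^-_r)$. Since $\Omega^-_r$, $\Omega$ and $\Omega^+_{2r}$ are all homotopically equivalent to $\Omega$ by the choice of $r$, the right-hand side equals $cat_\Omega(\Omega)$, and the lemma follows. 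The map $\beta$ is already at hand: by Proposition \ref{prop3}, for $\mu\in(0,\mu^*)$ and $\lambda\geqslant\Lambda(\mu)$ we have $\beta(u)\in\Omega^+_{2r}$ for every $u\in I_{\lambda,\mu}^{c_{\mu,r}}$, so $\beta$ restricts to a continuous map into $\Omega^+_{2r}$. It therefore remains to build $\Phi$ and to evaluate $\beta\circ\Phi$.

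To construct $\Phi$ I would use the radial ground state $v>0$ of $I_{\mu,B_r}$ on $\mathcal{N}_{\mu,B_r}$, extended by zero to $\mathbb{R}^N$, so that $v$ is supported in $\overline{B_r(0)}$ and $I_{\mu,B_r}(v)=c_{\mu,r}$. For $y\in\Omega^-_r$ set $\Phi(y):=v(\cdot-y)$. Since $dist(y,\partial\Omega)>r$, the support $\overline{B_r(y)}$ lies inside $\Omega$, where $V\equiv0$; hence the potential term is invisible and, by translation invariance of the gradient and convolution terms,
$$\langle I'_{\lambda,\mu}(\Phi(y)),\Phi(y)\rangle=\langle I'_{\mu,B_r}(v),v\rangle=0,\qquad I_{\lambda,\mu}(\Phi(y))=I_{\mu,B_r}(v)=c_{\mu,r}.$$
Thus $\Phi(y)\in\mathcal{N}_{\lambda,\mu}$ and $\Phi(y)\in I_{\lambda,\mu}^{c_{\mu,r}}$, independently of $\lambda$; moreover $y\mapsto v(\cdot-y)$ is continuous from $\Omega^-_r$ into $E$, because translation is continuous in $H^1$ and the norm $\|\cdot\|_\lambda$ agrees with the $H^1$ norm on these functions. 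Hence $\Phi:\Omega^-_r\to I_{\lambda,\mu}^{c_{\mu,r}}$ is continuous.

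Next I would compute $\beta(\Phi(y))$. Because $\Phi(y)$ is supported in $B_r(y)\subset\Omega$ and $\eta(x)=x$ on $\Omega$, the factor $\eta(x)$ may be replaced by $x$ throughout the defining integrals. Changing variables $x\mapsto x+y$, $z\mapsto z+y$ turns each numerator integral into
$$\int_{\mathbb{R}^N}\int_{\mathbb{R}^N}\frac{(x+y)\,|v(x)|^{2^*_i}|v(z)|^{2^*_i}}{|x-z|^{N-\alpha_i}}\,dxdz,$$
and the radial symmetry of $v$ forces the first-moment part $\int\int\frac{x\,|v(x)|^{2^*_i}|v(z)|^{2^*_i}}{|x-z|^{N-\alpha_i}}\,dxdz$ to vanish. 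Consequently each numerator equals $y$ times the corresponding denominator, whence $\beta(\Phi(y))=y$; that is, $\beta\circ\Phi=j$ is exactly the inclusion $\Omega^-_r\hookrightarrow\Omega^+_{2r}$, and a fortiori homotopic to it. Feeding this into the comparison principle and using the homotopy equivalences $\Omega^-_r\simeq\Omega\simeq\Omega^+_{2r}$ yields the claimed inequality.

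I expect the delicate points to be, first, checking that $\Phi$ genuinely lands in the sublevel set — which hinges on $V$ vanishing on $\Omega$, so that $I_{\lambda,\mu}$ and $I_{\mu,B_r}$ coincide on functions supported in $B_r(y)$ uniformly in $\lambda$ — and, second, the barycenter identity $\beta\circ\Phi=\mathrm{id}$, whose proof rests essentially on the radial symmetry of $v$ killing the first moment together with $\eta$ agreeing with the identity on $\Omega$. The abstract category inequality itself is routine once the two maps and the homotopy are in place.
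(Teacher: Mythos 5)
Your proposal is correct and follows essentially the same route as the paper: the same translated radial minimizer $y\mapsto v(\cdot-y)$, shown to lie in $\mathcal{N}_{\lambda,\mu}\cap I_{\lambda,\mu}^{c_{\mu,r}}$ because $V\equiv 0$ on $\Omega$ makes $I_{\lambda,\mu}$ coincide with $I_{\mu,B_r}$ on such functions, the same barycenter identity $\beta\circ\Phi=\mathrm{id}$ on $\Omega^-_r$, and the same use of Proposition \ref{prop3} to guarantee that $\beta$ maps the sublevel set into $\Omega^+_{2r}$. The only difference is presentational: where you cite the standard Lusternik--Schnirelman comparison principle, the paper writes out the underlying covering-and-deformation argument explicitly, while conversely you supply the radial-symmetry computation showing the first moment vanishes (hence $\beta(\Phi(y))=y$), which the paper merely asserts.
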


\begin{proof}
  Define $\gamma:\Omega^-_r\to I_{\lambda,\mu}^{ c_{\mu,r}}$ by
  \begin{align*}
    \gamma(y)(x)=\begin{cases}
      v(x-y),\quad&x\in B_r(y),\\
      0,\quad&x\notin B_r(y).
    \end{cases}
  \end{align*}
 We may verify that $\gamma(y)(x)\in\mathcal{N}_{\lambda,\mu}$, $I_{\lambda,\mu}(\gamma(y)(x))\leqslant c_{\mu,r}$ and $\beta\circ\gamma=id:\Omega^-_r\to\Omega^-_r$.

  Assume that $cat_{I_{\lambda,\mu}^{ c_{\mu,r}}}(I_{\lambda,\mu}^{ c_{\mu,r}})=n$, and
  $$I_{\lambda,\mu}^{ c_{\mu,r}}=\bigcup^{n}_{j=1}A_j,$$
  where $A_j,j=1,2,\cdots,n$, is closed and contractible in $I_{\lambda,\mu}^{ c_{\mu,r}}$, i.e. there exits $h_j\in\mathcal{C}([0,1]\times A_j,I_{\lambda,\mu}^{ c_{\mu,r}}$ such that, for every $u,v\in A_j$,
  $$h_j(0,u)=u,\quad h_j(1,u)=h_j(1,v).$$
 Let $B_j:=\gamma^{-1}(A_j),j=1,2,\cdots,n$.  For each $x\in \Omega^-_r$, $$\gamma(x)\in I_{\lambda,\mu}^{ c_{\mu,r}}\subset \bigcup^{n}_{j=1}A_j.$$
  So there exits $j_0$ such that $\gamma(x)\in A_{j_0}$, that is, $x\in\gamma^{-1}(A_{j_0})=B_{j_0}$. Therefore,
  $$\Omega^-_r\subset\bigcup^{n}_{j=1}B_j.$$

 For $x,y\in B_j, \gamma(x),\gamma(y)\in A_j$,
  the deformation
  $$g_j(t,x)=\beta_0(h_j(t,\gamma(x))), j=1,2,\cdots,n,$$
  fulfills
  $$g_j(0,x)=\beta_0(h_j(0,\gamma(x)))=\beta_0(\gamma(x))=x$$
  and
  $$g_j(1,x)=\beta_0(h_j(1,\gamma(x)))=\beta_0(h_j(1,\gamma(y)))=g_j(1,y).$$
 Hence,  $B_j$ is contractible in $\Omega^+_{2r}$. It follows that
  $$cat_\Omega(\Omega)=cat_{\Omega^+_{2r}}(\Omega^-_r)\leqslant \sum_{k=1}^{n}cat_{\Omega^+_{2r}}(B_k)=n.$$
\end{proof}

\bigskip
By Theorem 5.20 of \cite{willem}, we have the following lemma.
\begin{lemma}\label{n solutions}
  If $I_{\lambda,\mu}$ constraint to $\mathcal{N}_{\lambda,\mu}$ denoted by $I_{\lambda,\mu}\big|_{\mathcal{N}_{\lambda,\mu}}$ is bounded from below and satisfies the $(PS)_c$ condition for any $c\in[c_{\lambda,\mu},c_{\mu,r}]$, then $I_{\lambda,\mu}\big|_{\mathcal{N}_{\lambda,\mu}}$ has a minimum and $I_{\lambda,\mu}^{ c_{\mu,r}}$ contains at least $cat_{I_{\lambda,\mu}^{ c_{\mu,r}}}(I_{\lambda,\mu}^{ c_{\mu,r}})$ critical points of $I_{\lambda,\mu}\big|_{\mathcal{N}_{\lambda,\mu}}$.
\end{lemma}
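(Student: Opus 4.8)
The plan is to recognize Lemma \ref{n solutions} as a direct application of the abstract Lusternik--Schnirelman theory for $C^1$ functionals restricted to a Nehari-type $C^1$ manifold, as encapsulated in Theorem 5.20 of \cite{willem}. The first task is to verify that $\mathcal{N}_{\lambda,\mu}$ is a genuine $C^1$ submanifold of $E$ on which that theory applies. Writing $G(u) := \langle I'_{\lambda,\mu}(u), u\rangle$, one has $\mathcal{N}_{\lambda,\mu} = \{u \neq 0 : G(u) = 0\}$. Differentiating along the ray $su$ and using the fact, already established in the proof of Proposition \ref{prop1}, that for each $u \in E\setminus\{0\}$ the fibering map $f_u(t) = I_{\lambda,\mu}(tu)$ has a unique critical point $t_u$ which is a strict maximum, one computes $\langle G'(u), u\rangle = (2-p)\mu|u|_p^p + \sum_{i=1}^k(2-2\cdot 2^*_i)\int_{\mathbb{R}^N}(|x|^{-(N-\alpha_i)}*|u|^{2^*_i})|u|^{2^*_i}\,dx < 0$ on $\mathcal{N}_{\lambda,\mu}$, since $p>2$ and $2^*_i>1$. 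Hence $0$ is a regular value of $G$ and $\mathcal{N}_{\lambda,\mu}$ is a $C^1$ manifold of codimension one, on which $I_{\lambda,\mu}$ restricts to a $C^1$ functional.

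Next I would check completeness of $\mathcal{N}_{\lambda,\mu}$ and the equivalence of constrained and free critical points. By Lemma \ref{lem6}, every $u \in \mathcal{N}_{\lambda,\mu}$ satisfies $\|u\|_\lambda \geqslant \sigma > 0$, so the manifold is bounded away from the origin and is closed in $E$, hence complete in the induced metric. Moreover, if $u$ is a critical point of $I_{\lambda,\mu}|_{\mathcal{N}_{\lambda,\mu}}$, then $I'_{\lambda,\mu}(u) = \theta\, G'(u)$ for some Lagrange multiplier $\theta$; pairing with $u$ gives $0 = \langle I'_{\lambda,\mu}(u), u\rangle = \theta \langle G'(u), u\rangle$, and since $\langle G'(u), u\rangle \neq 0$ we conclude $\theta = 0$. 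Thus the constrained critical points are precisely the free critical points of $I_{\lambda,\mu}$, and in particular are weak solutions of \eqref{p0}.

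With these structural facts in place, the hypotheses of Theorem 5.20 of \cite{willem} are met: $I_{\lambda,\mu}|_{\mathcal{N}_{\lambda,\mu}}$ is a $C^1$ functional on the complete $C^1$ manifold $\mathcal{N}_{\lambda,\mu}$, and by assumption it is bounded from below and satisfies $(PS)_c$ for every $c \in [c_{\lambda,\mu}, c_{\mu,r}]$. The abstract theorem then yields both that $I_{\lambda,\mu}|_{\mathcal{N}_{\lambda,\mu}}$ attains its infimum and that the sublevel set $I_{\lambda,\mu}^{c_{\mu,r}}$ contains at least $cat_{I_{\lambda,\mu}^{c_{\mu,r}}}(I_{\lambda,\mu}^{c_{\mu,r}})$ critical points of the constrained functional, which is the assertion.

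The main obstacle I anticipate is the precise matching of the abstract hypotheses of Theorem 5.20 of \cite{willem} with the concrete setting rather than any new estimate: one must confirm that $\mathcal{N}_{\lambda,\mu}$ qualifies as a complete $C^1$-Finsler manifold (completeness following from closedness together with the uniform bound $\|u\|_\lambda \geqslant \sigma$), that $0$ is a regular value of $G$ so that both the manifold and the restriction are $C^1$, and that the $(PS)_c$ condition assumed in the statement is read as the Palais--Smale condition for the restricted functional, i.e. in terms of the norm of the tangential differential. Granting these identifications, the deformation lemma on $\mathcal{N}_{\lambda,\mu}$ and the standard Lusternik--Schnirelman counting argument packaged in Theorem 5.20 deliver the attainment of the minimum and the category lower bound on the number of critical points lying in $I_{\lambda,\mu}^{c_{\mu,r}}$, and the proof closes.
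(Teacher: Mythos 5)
Your proposal is correct and follows the same route as the paper: the paper offers no proof at all beyond the sentence ``By Theorem 5.20 of \cite{willem}, we have the following lemma,'' i.e.\ it treats the statement as a direct instance of the abstract Lusternik--Schnirelman result. Your additional verifications --- that $0$ is a regular value of $G(u)=\langle I'_{\lambda,\mu}(u),u\rangle$ via $\langle G'(u),u\rangle<0$ on $\mathcal{N}_{\lambda,\mu}$, that the manifold is complete since $\|u\|_\lambda\geqslant\sigma$, and that Lagrange multipliers vanish so constrained critical points are free ones --- are exactly the hypotheses the paper leaves implicit, and they check out.
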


\bigskip

\emph{\bf Proof of Theorem \ref{th3}.} For $0<\mu\leqslant\mu^*$ and $\lambda\geqslant\Lambda(\mu)$, we defined two maps
$$\Omega^-_r\stackrel{\gamma}{\longrightarrow}I_{\lambda,\mu}^{ c_{\mu,r}}\stackrel{\beta}{\longrightarrow}\Omega^+_{2r}.$$
The conclusion follows from  Proposition \ref{prop1},  Proposition \ref{prop3}, Lemma \ref{cat} and Lemma \ref{n solutions}. $\Box$

\section*{Acknowledgement}
This work was supported by  the National Natural Science Foundation of China (No: 12171212). 

\bibliographystyle{alpha}

\end{document}